\newtheorem{definition}{Definition}
\newtheorem{proposition}[definition]{Proposition}
\newtheorem{theorem}[definition]{Theorem}
\newtheorem{lemma}[definition]{Lemma}
\newcommand{\fin}{\small\rm fin}
\newcommand{\C}{{\mathcal C}}
\newcommand{\E}{{\mathcal E}}
\newcommand{\RR}{{\mathbb R}}
\newcommand{\U}{{\mathcal U}}
\newcommand{\Z}{{\mathbb Z}}
\newcommand{\ZZ}{{\mathbb Z}}
\newcommand{\id}{\mathbbm{1}}
\newcommand{\interior}{{\rm int\>}}
\newcommand{\sm}{\setminus}
\newcommand{\ke}{{\rm Ker\>}}
\newcommand{\im}{{\rm Im\>}}
\let\sub=\subseteq
\let\subset=\subseteq
\let\phi=\varphi
\let\es=\emptyset
\newcommand\fes{(f_e)_\sharp}
\newcommand\restr{\!\restriction\!}
\newcommand{\fs}{f_\sharp}
\newcommand{\gs}{g_\sharp}
\newcommand{\noproof}{\unskip\nobreak\hfill\penalty50\hskip2em\hbox{}\nobreak\hfill%
       $\square$\parfillskip=0pt\finalhyphendemerits=0\par}
\newcommand{\COMMENT}[1]{}
\newcommand{\emtext}[1]{\text{\em #1}}
\newenvironment{txteq}
  {\begin{equation}\begin{minipage}[c]{0.8\textwidth}\em}
  {\end{minipage}\ignorespacesafterend\end{equation}\ignorespacesafterend}
\newcommand{\assign}{
  \mathrel{\mathop{:}}=
}
\newcommand{\sasign}{
  \mathrel{=\mathop{:}}
}
\def\lowfwd #1#2#3{{\setbox0\hbox{$#1$}\setbox1\hbox{$E'\!$}
            \mathchoice
            {{\mathop{\kern0pt #1}\limits^{\kern#2pt\raise.#3ex
     \vbox to 0pt{\hbox{$\scriptscriptstyle\rightarrow$}\vss}}}}%
            {{\mathop{\kern0pt #1}\limits^{\kern#2pt\raise.#3ex
     \vbox to 0pt{\hbox{$\scriptscriptstyle\rightarrow$}\vss}}}}%
            {\ifdim\wd0<\wd1{\,\vec{#1}\,}\else
     {\mathop{\kern0pt #1}\limits^{\kern#2pt\raise.0ex
     \vbox to 0pt{\hbox{$\scriptscriptstyle\rightarrow$}\vss}}}\fi}%
            {{\vec{#1}}}%
            }}
\def\fwd #1#2{{\lowfwd{#1}{#2}{15}}}
\def\lowbkwd #1#2#3{{\mathop{\kern0pt #1}\limits^{\kern#2pt\raise.#3ex
     \vbox to 0pt{\hbox{$\scriptscriptstyle\leftarrow$}\vss}}}}
\def\vC{\kern-1pt\fwd C3\kern-.5pt}
\def\vCC{\kern-.7pt\fwd{\C}3\kern-.7pt}
\def\vd{\kern-1pt\lowfwd d2{10}\kern-1pt}
\def\vD{\kern-.7pt\fwd D3\kern-.5pt}
\def\ve{\kern-1pt\lowfwd e{1.5}1\kern-1pt}
\def\vf{\kern-1pt\lowfwd f{1.5}1\kern-1pt}
\def\fv{\kern-1pt\lowbkwd f{1.5}1\kern-1pt}
\def\ev{\kern-1pt\lowbkwd e{1.5}1\kern-1pt}
\def\veStar{{\mathop{\kern0pt e\lower1.5pt\hbox{${}^*$}}\limits^{\kern0pt
   \raise.02ex\vbox to 0pt{\hbox{$\scriptscriptstyle\rightarrow$}\vss}}}}
\def\eStarv{{\mathop{\kern0pt e\lower1.5pt\hbox{${}^*$}}\limits^{\kern0pt
   \raise.02ex\vbox to 0pt{\hbox{$\scriptscriptstyle\leftarrow$}\vss}}}}
\def\vedash{{\mathop{\kern0pt e\lower.5pt\hbox{${}% logically \v(e')
     \scriptstyle'$}}\limits^{\kern0pt\raise.02ex
     \vbox to 0pt{\hbox{$\scriptscriptstyle\rightarrow$}\vss}}}}
\def\vE{\kern-.7pt\fwd E3\kern-.7pt}
\def\vEE{\kern-.7pt\fwd{\E}3\kern-.7pt}
\def\vF{\kern-.7pt\fwd F3\kern-.7pt}
\def\vG{\kern-.7pt\fwd G3\kern-.7pt}
\def\vH{\kern-.5pt\fwd H3\kern-.5pt}
\def\vP{\kern-.7pt\fwd P3\kern-.6pt}
\def\specrel#1#2{\mathrel{\mathop{\kern0pt #1}\limits_{#2}}}
\newcommand{\ee}{\text{e}}
\newcommand{\ii}{\text{i}}
\title{\hbox{On the homology of locally compact spaces with ends}}
\author{Reinhard Diestel and Philipp Spr\"ussel}
\date{}
\begin{document}      

\maketitle

\begin{abstract}
We propose a homology theory for locally compact spaces with ends in~which the ends play a special role. The approach is motivated by results for graphs with ends, where it has been highly successful. But it was unclear how the original graph-theoretical definition could be captured in the usual language for homology theories, so as to make it applicable to more general spaces. In this paper we provide such a general topological framework: we define a homology theory which satisfies the usual axioms, but which maintains the special role for ends that has made this homology work so well for graphs.
\end{abstract}

\smallskip

\section{Introduction}

The first homology group of a finite graph~$G$, known in graph theory as its \emph{cycle space}, is an important aspect in the study of graphs and their properties. Although the groups that occur tell us little as such---they are always a sum of $\Z$s depending only on the number of vertices and edges of~$G$---they way the interact with the combinatorial structure of~$G$ has implications for commonly investigated graph properties such as planarity or duality.

For the simplicial homology of infinite graphs these standard theorems fail, but this can be remedied: they do work with the cycle space $\C(G)$ constructed in~\cite{DiestelBook05, CyclesI}, as amply demonstrated e.g. in~\cite{BergerBruhnDeg, LocFinTutte, Duality, bicycle, Partition, LocFinMacLane, DiestelBook05, CyclesIntro, hotchpotch, geo, Arboricity}. This space is built not from finite (elementary) cycles in $G$ itself, as in simplicial homology, but from the (possibly infinite) edge sets of topological circles in the Freudenthal compactification $|G|$ of~$G$, obtained from $G$ by adding its ends. The definition of $\C(G)$ also allows for locally finite infinite sums.

Given the success of $\C(G)$ for graphs, it seems desirable to recast its definition in homological terms that make no reference to the one-dimensional character of~$|G|$ (e.g., to circles), to obtain a homology theory for similar but more general spaces (such as non-compact CW complexes of any dimension) that implements the ideas and advantages of $\C(G)$ more generally. Simplicial homology is easily seen not to be the right approach. One way of extending simplicial homology to more general spaces is \v{C}ech homology; and indeed its first group applied to~$|G|$ turns out to be isomorphic to~$\C(G)$. But there the usefulness of \v{C}ech homology ends: since its groups are constructed as limits rather than directly from chains and cycles, they do not interact with the combinatorial structure of $G$ in the way we expect and know it from~$\C(G)$~\cite{Hom1}. We therefore adopt a singular approach.

On the face of it, it is not clear whether $\C(G)$ might in fact be isomorphic, even canonically, to the first singular homology group $H_1(|G|)$ of $|G|$. However, it was shown in~\cite{Hom1} that it is not: surprisingly, $\C(G)$ is always a natural quotient of~$H_1(|G|)$, but this quotient is proper unless $G$ is essentially finite. Thus, $\C(G)$~is a genuinely new object, also from a topological point of view.

In this paper, we shall define a homology theory that satisfies all the usual axioms and will work for any locally compact Hausdorff space~$X$ given with a fixed (Hausdorff) compactification~$\hat X$. For compact $X=\hat X$ our homology will coincide with the standard singular homology.%
   \COMMENT{}
  For non-compact~$X$, it will be `larger' than the homology of $X$ itself, but `smaller' than the singular homology of~$\hat X$.
  When $X$ is a graph and $\hat X = |X|$ is its Freudenthal compactification, its first group will be canonically isomorphic to the cycle space $\C(X)$ of~$X$.

The main idea of our homology, and how it comes to sit `between' the homologies of $X$ and of~$\hat X$, is that we use the compactification points, or \emph{ends}, differently from other points. Ends will be allowed as inner points of simplices, but not as vertices of simplices. The chains we use, which may be infinite, have to be locally finite in~$X$ but not around ends. Note that we use the term `end' loosely here, for any point in $\hat X\sm X$. These may be ends in the usual sense; but we allow other situations too, such as boundary points of hyperbolic groups etc.%
   \COMMENT{}

As a one-dimensional example of what to expect as the (intended) outcome, consider the infinite 1-chain $\sum_{i\in\Z} \sigma_i$ in the space~$\RR$, where $\sigma_i\colon [0,1]\to [i,i+1]$ maps $x$ to~$i+x$. This chain has zero boundary, but there are many good reasons why we do not want to allow it as a 1-cycle.%
   \COMMENT{}
   Now add edges $e_i$ from $i$ to~$-i$, for every integer~$i$. In the new space obtained, the same chain $\sum_{i\in\Z} \sigma_i$ will now be a welcome 1-cycle. The reason is that its simplices now form a circle: the addition of the new edges has resulted in the two ends of $\RR$ being identified into one end. Hence in the new ambient space our chain can be viewed as a single loop subdivided infinitely often. We shall want infinite subdivision to be possible within a homology class, and thus our chain must now be equivalent to that loop. The challenge in setting up our homology theory will lie in how to allow ends to influence and shape the homology indirectly, as in this example, while at the same time meeting the formal axioms for a homology theory that make no reference to an ambient space.

  This paper is organized as follows. After giving the basic definitions in Section~\ref{sec:term}, we discuss a preliminary version of our new homology in Section~\ref{sec:old}. This version is very simple to define, and for graphs it already captures the cycle space. However, it falls short of one of the usual axioms for homology theories, the `long exact sequence' axiom. This is remedied in Section~\ref{sec:new}, where we refine the definition of our new homology. We then show that it satisfies the axioms for homology and that for spaces $|X|$ with $X$ a graph it coincides with the cycle space $\C(X)$.

\section{Terminology and basic facts}\label{sec:term}

We use the terminology of~\cite{DiestelBook05} for graphs and that of~\cite{Hatcher} for topology. Our graphs may have multiple edges but no loops. This said, we shall from now on use the term \emph{loop} topologically: for a topological path $\sigma\colon [0,1]\to X$ with $\sigma(0) = \sigma(1)$. This loop is \emph{based at} the point~$\sigma(0)$.

Let us define the (topological) cycle space $\C$ of a locally finite graph~$G$. This is usually defined over~$\Z_2$ (which suffices for its role in graph theory), but we wish to prove our main results more generally with integer coefficients. (The $\Z_2$ case will easily follow.) We therefore need to speak about orientations of edges.

An edge $e=uv$ of a locally finite graph $G$ has two \emph{directions}, $(u,v)$ and~$(v,u)$.%
   \COMMENT{}
A~triple $(e,u,v)$ consisting of an edge together with one of its two directions is an \emph{oriented edge}. The two oriented edges corresponding to $e$ are its two \emph{orientations}, denoted by $\ve$ and~$\ev$. Thus, $\{\ve,\ev\} = \{(e,u,v), (e,v,u)\}$, but we cannot generally say which is which.%
   \COMMENT{}
However, from the definition of $G$ as a CW-complex we have a fixed homeomorphism $\theta_e\colon [0,1]\to e$. We call $(\theta_e(0),\theta_e(1))$ the \emph{natural direction} of~$e$, and $(e,\theta_e(0),\theta_e(1))$ its \emph{natural orientation}.

Let $\vEE = \vEE(G)$ denote the set of all integer-valued functions $\phi$ on the set $\vE$ of all oriented edges of $G$ that satisfy $\phi(\ev) = -\phi(\ve)$ for all $\ve\in \vE$. This is an abelian group under pointwise addition. A~family $(\phi_i\mid i\in I)$ of elements of $\vEE$ is \emph{thin} if for every $\ve\in\vE$ we have $\phi_i(\ve)\ne 0$ for only finitely many~$i$. Then $\phi = \sum_{i\in I} \phi_i$ is a well-defined element of~$\vEE$: it maps each $\ve\in\vE$ to the (finite) sum of those $\phi_i(\ve)$ that are non-zero. We shall call a function $\phi\in\vEE$ obtained in this way a \emph{thin sum} of those~$\phi_i$.

The (topological) \emph{cycle space} $\C(G)$ of $G$ is the subgroup of $\vEE$ consisting of all thin sums of maps $\vEE\to\Z$ defined naturally by the oriented \emph{circles} in the Freudenthal compactification $|G|$ of $G$, the homeomorphic images in $|G|$ of the (oriented) circle $S^1$.

As already mentioned, this notion of the cycle space enables us to generalize all the usual finite cycle space theorems to locally finite graphs. One basic fact that we will need later is the following. A set $\vF\subset\vEE$ is an \emph{oriented cut} if there is a vertex set $X$ such that $\vF$ is the set of all oriented edges from $X$ to $V\sm X$, i.e.\ oriented edges $(e,x,y)$ with $x\in X$ and $y\in V\sm X$.

\begin{lemma}\label{orthogonal}
  An element of the edge space of a locally finite graph $G$ lies in the cycle space if and only if its values on the edges of every finite oriented cut of $G$ sum to zero.
\end{lemma}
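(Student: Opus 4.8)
The plan is to prove the two implications separately. The forward implication, that every element of $\C(G)$ sums to zero on each finite oriented cut, is routine, so I would dispose of it first. Since every element of $\C(G)$ is by definition a thin sum of the functions given by oriented circles, and a finite oriented cut $\vF$ contains only finitely many edges, the value of a thin sum on $\vF$ is the finite sum of the circles' values on $\vF$; hence it suffices to check a single oriented circle $C$. Let $X$ be the vertex set defining $\vF$. Deleting from $|G|$ the (finitely many) interiors of the edges in $\vF$ splits $|G|$ into two disjoint open-and-closed pieces, one containing $X$ and one containing $V\sm X$, with every end belonging to exactly one of them — this is where finiteness of the cut enters, since a ray eventually avoids the finite edge set and so settles on one side. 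Deleting from $C$ the interiors of the cut edges it traverses therefore decomposes $C$ into arcs, each lying on one side, with consecutive arcs on opposite sides. Traversing $C$ once thus crosses $\vF$ from $X$ to $V\sm X$ exactly as often as from $V\sm X$ to $X$, so the signed values of $C$ on the oriented edges of $\vF$ cancel.

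For the converse, suppose $\phi\in\vEE$ sums to zero on every finite oriented cut. Applying this to the cut consisting of all edges at a single vertex $v$, which is finite since $G$ is locally finite, shows that $\phi$ obeys Kirchhoff's law at every vertex: the net value of $\phi$ on the edges leaving $v$ is zero. The goal is to write $\phi$ as a thin sum of oriented circle functions, which I would do by repeatedly extracting circles from the support $\{e : \phi(\ve)\ne 0\}$. Starting from any support edge and using Kirchhoff's law to prolong an oriented trail along edges on which $\phi$ is positive in the direction of travel, one never gets stuck at a vertex; subtracting the function $\phi_C$ of the resulting circle $C$ decreases $|\phi(\ve)|$ by one on each edge of $C$ and leaves the other values unchanged, while preserving both Kirchhoff's law and (by the forward direction, applied to $C$) orthogonality to every finite cut. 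Iterating and always routing the next circle through the least-indexed edge still carrying a nonzero residual value, I would obtain a family $(C_i)$ whose functions sum to $\phi$: each edge $e$ is used by at most $|\phi(\ve)|$ of the circles, so the residual value on $e$ stabilizes to $0$ after finitely many steps and the family is automatically thin.

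The one genuinely hard step is the extraction itself: showing that whenever the residual $\psi$ is nonzero and orthogonal to every finite cut, there really is an oriented \emph{circle} — a homeomorphic image of $S^1$ in $|G|$ — inside its support on which $\psi$ is consistently positive. Kirchhoff's law alone does not suffice: a unit flow along a double ray running from one end to another is divergence-free yet is not a circle, and indeed it crosses the finite cut separating the two ends with net value $\pm 1$. It is exactly the finite-cut hypothesis that rules this out and forces the prolonged trail either to return to a visited vertex (yielding a finite circle) or to have both of its tails converge to a single end, closing up into an infinite circle through that end. Making this precise — that the trail cannot ``leak to infinity'' across a finite cut, and that its two ends meet at a common point of $\hat{\phantom{x}}$ the compactification — is where the topology of $|G|$ and the compactness of the Freudenthal compactification genuinely enter, and is the point at which I would either argue by a direct limiting/compactness argument or appeal to the known decomposition of balanced standard subspaces of $|G|$ into circles.
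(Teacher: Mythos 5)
The paper itself gives no proof of this lemma: it cites \cite{DiestelBook05} for the unoriented $\Z_2$-version (Theorem~8.5.8 there) and asserts that the argument ``adapts readily'' to the oriented integer version, so the benchmark is the textbook proof, whose hard direction rests on genuinely topological input about $|G|$ (that a finite cut separates $|G|$, ends included, into two clopen sides, and that connected standard subspaces of $|G|$ are arc-connected). Your forward direction is complete and correct, and is the standard argument: finiteness of the cut lets you exchange the two sums and reduce to a single circle, and the alternation of sides along the circle gives cancellation.

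The converse, however, has a genuine gap, and it sits exactly where you say it does. The extraction-and-iteration architecture is right, and your bookkeeping (least-indexed edge, residual strictly decreasing on the circle's edges, thinness from $|\phi(\ve)|$ bounding the number of circles through $e$) is fine. But the extraction step \emph{is} the lemma, and the picture you offer for it---an oriented trail that either revisits a vertex or has its two tails converge to a single end, ``closing up into an infinite circle through that end''---is not adequate in general: circles in $|G|$ can pass through infinitely many ends (the wild circles in the binary tree with its ends, suitably augmented, are the standard example), and no single two-tailed trail prolongation produces them. Your fallback, appealing to ``the known decomposition of balanced standard subspaces of $|G|$ into circles,'' is essentially an appeal to the equivalence being proved, since that decomposition theorem is exactly the content of the cited Theorem~8.5.8; so as written the proposal reduces the lemma to the paper's own citation rather than proving it. Note also an orientation issue specific to the integer version: even granted a circle inside the closure of the support, you need it to traverse \emph{every} one of its edges in the direction on which the residual is positive, which the arc-connectedness machinery does not hand you automatically and which your trail heuristic only guarantees in the finite case. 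Closing the gap would require actually running the topological argument: show that the closure of a suitable ``positive'' part of the support is a standard subspace still satisfying the finite-cut condition, deduce arc-connectedness of its components, and extract from that a consistently oriented circle.
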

See~\cite{DiestelBook05} for a proof of the (unoriented) $\Z_2$-version of Lemma~\ref{orthogonal}. It adapts readily to the version stated here.

For an oriented edge $\ve$ of $G$ and a path $\sigma:[0,1]\to|G|$, a \emph{pass} of $\sigma$ through $\ve$ is a restriction of $\sigma$ to a subinterval $[a,b]$ of $[0,1]$ such that $\sigma(a)$ is the first and $\sigma(b)$ the last vertex of $\ve$ and such that $(a,b)$ is mapped to the interior of $e$.

The \emph{standard $n$-simplex}
\begin{equation*}
  \Big\{(t_0,\dotsc,t_n)\in\RR^{n+1} \bigm| \sum_i t_i=1 \text{ and } t_i\ge 0\text{ for all } i\Big\}
\end{equation*}
is denoted by $\Delta^n$. Given points $v_0,\dotsc,v_n\in\RR^m$ (not necessarily in general position),%
   \COMMENT{}
we write $[v_0,\dotsc,v_n]$ for their convex hull.%
   \COMMENT{}
The \emph{natural map} $\Delta^n\to[v_0,\dotsc,v_n]$ is the linear map $(t_0,\dotsc,t_n)\mapsto\sum t_iv_i$.

If $v_0,\dotsc,v_n$ are in general position, then the natural map $\Delta^n\to[v_0,\dotsc,v_n]$ is clearly a homeomorphism. Then $[v_0,\dotsc,v_n]$ is an \emph{$n$-simplex in $\RR^m$}, the point $v_i$ is its \emph{$i$th vertex}. Every convex hull of $k+1\le n$ vertices is a \emph{$k$-face} of $[v_0,\dotsc,v_n]$. We use $[v_0,\dotsc,\hat v_i,\dotsc,v_n]$ to denote the ($n-1$)-face spanned by all the vertices but $v_i$.

A \emph{singular $n$-simplex} in a topological space $X$ is a continuous map $\Delta^n\to X$. A \emph{$k$-face} of a singular $n$-simplex $\sigma$ is a map $\tau=\sigma\restr D$, where $D$ is a $k$-face of $\Delta^n$.

Given a set $\{X_k \mid i\in I\}$ of topological spaces, we write $X=\bigsqcup X_k$ for their disjoint union endowed with the disjoint union topology.%
   \COMMENT{}

A \emph{homology theory} assigns to every space $X$ and every subspace $A$ of $X$ a sequence $\big(H_n(X,A)\big)_{n\in\Z}$ of abelian groups\footnote{Usually $H_n(X,A)$ is the trivial group for $n<0$.}, and to every continuous map $f:X\to Y$ with $f(A)\subset B$ for subspaces $A$ of $X$ and $B$ of $Y$ (which we indicate by writing $f:(X,A)\to(Y,B)$) a sequence $f_*:H_n(X,A)\to H_n(Y,B)$ of homomorphisms, such that $(fg)_*=f_*g_*$ for compositions of maps and $\id_*=\id$ for the identity maps. We abbreviate $H_n(X,\es)$ to $H_n(X)$. Finally, the following \emph{axioms for homology} have to be satisfied:
\begin{description}
\item[Homotopy equivalence:]
  If continuous maps $f,g:(X,A)\to(Y,B)$ are homotopic, then $f_*=g_*$.
\item[The Long Exact Sequence of a Pair:]
  For each pair $(X,A)$ there are \emph{boundary homomorphisms} $\partial:H_n(X,A)\to H_{n-1}(A)$ such that
  \begin{equation*}
    \begin{xy}
     \xymatrix{
	\dotsb \ar[r]^{\partial\quad} & H_n(A) \ar[r]^{\iota_*} & H_n(X) \ar[r]^{\pi_*\;\;} & H_n(X,A) \ar[dl]_{\partial} & \\
	& & H_{n-1}(A) \ar[r]^{\iota_*} & H_{n-1}(X) \ar[r]^{\quad\pi_*} & \dotsb
      }
    \end{xy}
  \end{equation*}
  is an exact sequence, where $\iota$ denotes the inclusion $(A,\es)\to (X,\es)$ and $\pi$ denotes the inclusion $(X,\es)\to (X,A)$. These boundary homomorphisms are \emph{natural}, i.e.\ given a continuous map $f:(X,A)\to(Y,B)$ the diagrams
  \begin{equation*}
    \begin{xy}
      \xymatrix{
        H_n(X,A) \ar[r]^{\partial} \ar[d]^{f_*} & H_{n-1}(A) \ar[d]^{f_*}\\
	H_n(Y,B) \ar[r]^{\partial} & H_{n-1}(B)
      }
    \end{xy}
  \end{equation*}
  commute.
\item[Excision:]
  Given subspaces $A,B$ of $X$ whose interiors cover $X$, the inclusion $(B,A\cap B)\hookrightarrow(X,A)$ induces isomorphisms $H_n(B,A\cap B)\to H_n(X,A)$ for all $n$.
\item[Disjoint unions:]
  For a disjoint union $X=\bigsqcup_{\alpha}X_\alpha$ with inclusions $\iota_{\alpha}:X_{\alpha}\hookrightarrow X$, the direct sum map $\bigoplus_{\alpha}\left(\iota_{\alpha}\right)_*:\bigoplus_{\alpha}H_n(X_{\alpha},A_{\alpha}) \to H_n(X,A)$, where $A=\bigsqcup_{\alpha}A_{\alpha}$, is an isomorphism.
\end{description}

The original Eilenberg-Steenrod axioms~\cite{EilSteen} contain an additional axiom, called the `dimension axiom', stating that the homology groups of a single point are nonzero only in dimension zero. However, this axiom is not always regarded as an essential part of the requirements for a homology theory~\cite{Hatcher}. An example for a homology theory that does not satisfy the dimension axiom is bordism theory; in this case the groups of a single point are nontrivial in infinitely many dimensions. We omit the dimension axiom, but note that the homology theory we construct will trivially satisfy it.

The groups $H_n(X,A)$ above are called \emph{relative homology groups}; specializations $H_n(X)=H_n(X,\es)$ are \emph{absolute homology groups}.

\section{An ad-hoc modification of singular homology for locally compact spaces with ends}\label{sec:old}

In this section we describe an ad-hoc way to define homology groups that extend the main properties of the cycle space of graphs to arbitrary dimensions. The purpose of this section is to introduce the main ideas needed for the homology we shall define in Section~\ref{sec:new} in a technically simpler setting.

Let $X$ be a locally compact Hausdorff space and let $\hat X$ be a Hausdorff compactification of $X$. (See e.g.~\cite{AbelsStrantzalos} for more on such spaces.)%
   \COMMENT{}
Note that every locally compact Hausdorff space is Tychonoff, and thus has a Hausdorff compactification. The kind of spaces we have in mind is that $X$ is a locally finite CW-complex and $\hat X$ is its Freudenthal compactification, but formally we do not make any further assumptions. Nevertheless, we will call the points in $\hat X\sm X$ \emph{ends}, even if they are not ends in the usual, more restrictive, sense.

  Although our chains, cycles etc.\ will live in~$\hat X$, we shall denote their groups as $C_n(X)$, $Z_n(X)$ etc, with reference to $X$ rather than~$\hat X$: this is because ends will play a special role, so the information of which points of $\hat X$ are ends must be encoded in the notation for those groups.

Let us call a family $(\sigma_i\mid i\in I)$ of singular $n$-simplices in $\hat X$ \emph{admissible} if
\begin{enumerate}[(i)]
\item $(\sigma_i\mid i\in I)$ is locally finite in~$X$, that is, every $x\in X$ has a neighbourhood in $X$ that meets the image of $\sigma_i$ for only finitely many~$i$;%
   \COMMENT{}
   \item every $\sigma_i$ maps the $0$-faces of~$\Delta^n$ to~$X$.%
   \COMMENT{}
   \end{enumerate}
Note that as $X$ is locally compact, (i)~is equivalent to asking that every compact subspace of $X$%
   \COMMENT{}
meets the image of $\sigma_i$ for only finitely many~$i$.%
   \COMMENT{}
Condition~(ii), like~(i), underscores that ends are not treated on a par with the points in~$X$: we allow them to occur on infinitely many~$\sigma_i$ (which (i) forbids for points of~$X$), but not in the fundamental role of images of 0-faces: all simplices must be `rooted' in~$X$. If $X$ is%
   \COMMENT{}
a countable union of compact spaces, (i)~and (ii) together imply that admissible families are countable, i.e.\ that $|I|\le\aleph_0$.%
   \COMMENT{}

When $(\sigma_i\mid i\in I)$ is an admissible family of $n$-simplices, any formal linear combination $\sum_{i\in I} \lambda_i \sigma_i$ with all $\lambda_i\in\Z$ is an \emph{$n$-sum in~$X$}.\footnote{In standard singular homology, one does not usually distinguish between formal sums and chains. It will become apparent soon why we have to make this distinction.}%
   \COMMENT{}
We regard $n$-sums $\sum_{i\in I}\lambda_i\sigma_i$ and $\sum_{j\in J}\mu_j\tau_j$ as \emph{equivalent} if for every $n$-simplex $\rho$ we have $\sum_{i\in I, \sigma_i=\rho}\lambda_i = \sum_{j\in J, \tau_j=\rho}\mu_j$. Note that these sums are well-defined since an $n$-simplex can occur only finitely many times in an admissible family. We write $C_n(X)$ for the group of \emph{$n$-chains}, the equivalence classes of $n$-sums. The elements of an $n$-chain are its \emph{representations}. Clearly every $n$-chain $c$ has a unique (up to re-indexing) representation whose simplices are pairwise distinct---which we call the \emph{reduced representation} of $c$---, but we shall consider other representations too.%
   \COMMENT{}
The subgroup of $C_n(X)$ consisting of those $n$-chains that have a finite representation is denoted by $C'_n(X)$.

The boundary operators $\partial_n\colon C_n\to C_{n-1}$ are defined by extending linearly from~$\partial_n\sigma_i$, which are defined as usual in singular homology.%
   \COMMENT{}
Note that $\partial_n$ is well defined (i.e., that it preserves the required local finiteness),%
    \COMMENT{}
and $\partial_{n-1}\partial_n = 0$.%
   \COMMENT{}
Chains in $\im\partial$ will be called \emph{boundaries}.

As $n$-cycles, we do \emph{not} take the entire kernel of~$\partial_n$. Rather, we define $Z'_n(X) := \ke (\partial_n\restr C'_n(X))$,%
   \COMMENT{}
and let $Z_n(X)$ be the set of those $n$-chains that are sums of such finite cycles:%
   \COMMENT{}
\begin{equation*}
  Z_n (X) \assign \Big\{\phi\in C_n(X)\Bigm| \phi = \sum_{j\in J} z_j
    \emtext{ with } z_j\in Z'_n(X)\ \forall j\in J\Big\}\,.
\end{equation*}
More precisely,%
   \COMMENT{}
an $n$-chain $\phi\in C_n(X)$ shall lie in $Z_n(X)$ if it has%
   \COMMENT{}
a representation $\sum_{i\in I}\lambda_i\sigma_i$ for which $I$ admits a partition%
   \COMMENT{}
into finite sets~$I_j$ ($j\in J$) such that, for every $j\in J$, the $n$-chain $z_j \in C'_n(X)$ represented by $\sum_{i\in I_j} \lambda_i\sigma_i$ lies in $Z'_n(X)$. Any such representation of $\phi$ as a formal sum will be called a \emph{standard representation} of~$\phi$ \emph{as a cycle}.%
\footnote{Since the $\sigma_i$ need not be distinct, $\phi$~has many representations by formal sums. Not all of these need admit a partition as indicated---an example will be given later in the section.}%
   \COMMENT{}
We call the elements of $Z_n(X)$ the \emph{$n$-cycles} of~$X$.

The chains in $B_n(X) := \im\partial_{n+1}$ then form a subgroup of~$Z_n(X)$: by definition, they can be written as $\sum_{j\in J}\lambda_jz_j$ where each $z_j$ is the (finite) boundary of a singular ($n+1$)-simplex.%
   \COMMENT{}
We therefore have homology groups
\begin{equation*}
  H_n(X) := Z_n(X)/B_n(X)
\end{equation*}
as usual.

Note that if $X$ is compact, then all admissible families and hence all chains are finite,%
   \COMMENT{}
so the homology defined above coincides with the usual singular homology.
The characteristic feature of this homology is that while infinite cycles are allowed, they are always of `finite character': in any standard representation of an infinite cycle, every finite subchain is contained in a larger finite subchain that is already a cycle.%
   \COMMENT{}

For graphs and Freudenthal compactifications, the finite character of this homology is also shown in another aspect: It is shown in~\cite{Hom1} that every $1$-cycle---finite or infinite---is homologous to a cycle whose reduced representation consists of a single loop.% We thus have:

%\begin{proposition}\label{oldH1finite}
%  If $X$ is a graph and $\hat X$ its Freudenthal compactification, then every class in $H_1(X)$ is represented by a finite cycle.
%\end{proposition}

Let us now define relative homology groups $H_n(X,A)$. Normally, these groups are defined for all subsets $A\subset X$. In our case, the subspace $A$ has to satisfy further conditions. Since we wish to consider chains in $A$, in our sense, $A$ has to be locally compact and come with a compactification $\hat A$. Chains in $A$ have to be chains also in $X$, hence we further need that $\hat A\subset \hat X$, and that ends of $A$ lie in $\hat X\sm X$, that is, they have to be ends of $X$.%
   \COMMENT{}

Let $A$ be a \emph{closed} subset of $X$ (but not necessarily closed in $\hat X$). Since $X$ is locally compact, so is $A$.%
   \COMMENT{}
Let $\hat A$ denote the closure of $A$ in $\hat X$. Then $\hat A$ is a compactification of $A$, and $\hat A\sm A \subset \hat X\sm X$.%
   \COMMENT{}
Clearly, admissible families of simplices in $A$ are also admissible in $X$.%
   \COMMENT{}
We define $H_n(X,A)$ as follows. Let $C_n(X,A)$ be the quotient group $C_n(X)/C_n(A)$,%
   \footnote{Formally, $C_n(A)$ is not a subset of $C_n(X)$, because the equivalence classes of $n$-sums in $X$ are larger than those in~$A$. For instance, every formal sum $\sigma-\sigma$ with $\sigma$ a singular $n$-simplex in $X$ that does not live in $A$ is part of the equivalence class of the empty $n$-sum in $X$, but not in $A$. 
   But there is a natural embedding $C_n(A)\hookrightarrow C_n(X)$: map an $n$-chain in $A$ to the $n$-chain in $X$ with the same reduced representation.}
and let $C'_n(X,A)$ be the subgroup of all its elements $\phi+C_n(A)$ with $\phi\in C'_n(X)$. Define $Z'_n(X,A)$ as the kernel of the quotient map $C_n(X,A)\to C_{n-1}(X,A)$ of $\partial_n$ restricted to $C'_n(X,A)$, and $B_n(X,A)$ as the image of the quotient map $C_{n+1}(X,A)\to C_n(X,A)$ of $\partial_{n+1}$. Then define $Z_n(X,A)$ from $Z'_n(X,A)$ as before, and put $H_n(X,A)=Z_n(X,A)/B_n(X,A)$. Clearly, $H_n(X,\es)=H_n(X)$.

Let us look at an example. For simplicity, we will restrict our attention to absolute homology. Consider the \emph{double ladder}. This is the $2$-ended graph $G$ with vertices $v_n$ and~$v'_n$ for all integers~$n$, and with edges $e_n$ from $v_n$ to~$v_{n+1}$, edges $e'_n$ from $v'_n$ to~$v'_{n+1}$, and edges $f_n$ from $v_n$ to~$v'_n$. The 1-simplices corresponding to these edges, oriented in their natural directions, are $\theta_{e_n}$, $\theta_{e'_n}$ and~$\theta_{f_n}$, see Figure~\ref{fig:double}.

\begin{figure}[htbp]
  \centering
  \includegraphics[width=0.7\linewidth]{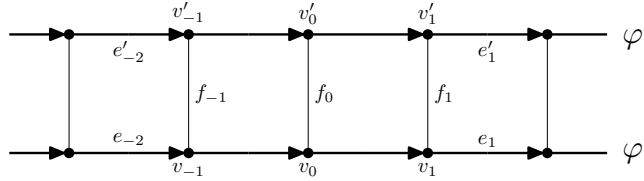}
  \caption{The $1$-chains $\phi$ and $\phi'$ in the double ladder.}
  \label{fig:double}
\end{figure}

In order to let the elements of our homology be defined, let $\hat G$ be any Hausdorff compactification of $G$. (One could, for instance, choose the Freudenthal compactification $|G|$ of $G$.) For the infinite chains $\phi$ and $\phi'$ represented by $\sum \theta_{e_n}$ and $\sum \theta_{e'_n}$, respectively, and for $\psi := \phi-\phi'$ we have $\partial\phi = \partial\phi' = \partial\psi = 0$, and neither sum as written above contains a finite cycle. However, we can rewrite $\psi$ as $\psi = \sum z_n$ with finite cycles $z_n = \theta_{e_n} + \theta_{f_{n+1}} - \theta_{e'_n} - \theta_{f_n}$. This shows that $\psi\in Z_1(G)$, although this was not visible from its original representation.

By contrast, one can show that $\phi\notin Z_1(G)$. This follows from Theorem~\ref{oldH1isC} below and the known characterizations of $\C(G)$~\cite[Theorem~8.5.8]{DiestelBook05}, but is not obvious. For example, one might try to represent $\phi$ as $\phi = \sum_{n=1}^{\infty} z'_n$ with $z'_n \assign \theta_{e_{-n}} + \theta_{n-1} + \theta_{e_n} - \theta_n$, where $\theta_n:[0,1]\to e_{-n}\cup\dotsb\cup e_n$ maps $0$ to $v_{-n}$ and $1$ to $v_{n+1}$, see Figure~\ref{fig:single}.

\begin{figure}[htbp]
  \centering
  \includegraphics[width=0.7\linewidth]{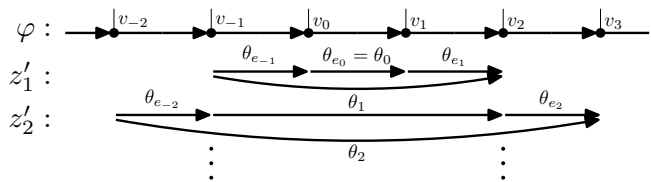}
  \caption{Finite cycles summing to~$\phi$---by an inadmissible sum.}
  \label{fig:single}
\end{figure}

This representation of $\phi$, however, although well defined as a formal sum (since every simplex occurs at most twice), is not a legal $1$-sum, because its family of simplices is not locally finite and hence not admissible. (The point $v_0$, for instance, lies in every simplex $\theta_i$.)

\begin{theorem}[\cite{Hom1}]\label{oldH1isC}
  If $X$ is a locally finite connected graph and $\hat X$ its
  Freudenthal compactification, then there is a natural group
  isomorphism from $H_1(X)$ to the cycle space $\C(X)$. Moreover,
  every class of $H_1(X)$ has a finite representative.
\end{theorem}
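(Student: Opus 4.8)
The plan is to define the isomorphism directly on cycles by recording, for each oriented edge, the net number of times a cycle passes through it. Concretely, for a singular $1$-simplex $\sigma$ and an oriented edge $\ve$ I would set $f_\sigma(\ve)$ to be the number of passes of $\sigma$ through $\ve$ minus the number through $\ev$, computed as a crossing number over the midpoint of $e$ so that it is finite and insensitive to how $\sigma$ wiggles inside~$e$; this gives $f_\sigma\in\vEE$. Extending linearly over finite chains and then over thin sums yields a map $f\colon Z_1(X)\to\vEE$. Here admissibility does the work: condition~(i), applied to an interior point of~$e$, guarantees that only finitely many simplices of a given representation meet~$e$, so $f(z)=\sum_i\lambda_i f_{\sigma_i}$ is a genuine thin sum and each value $f(z)(\ve)$ is well defined. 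I would then check $f(z)\in\C(X)$ via Lemma~\ref{orthogonal}: for a finite oriented cut determined by a finite vertex set, the total net flow of a cycle across the cut is zero by conservation (the endpoint contributions of the simplices cancel because $\partial z=0$), and an infinite cycle, being a thin sum of finite ones, maps to a thin sum of such cut-balanced functions, which again lies in $\C(X)$.

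Next I would show that $f$ kills boundaries, so that it descends to $\bar f\colon H_1(X)\to\C(X)$. For a singular $2$-simplex $\tau$, the loop $\partial\tau$ bounds the disc $\tau$, so its net crossing number over any edge midpoint is zero; hence $f(\partial_2\tau)=0$ and $f(B_1(X))=0$. Surjectivity is then easy: an oriented circle $C$ in $\hat X$ must contain an edge and hence a vertex~$v$, and tracing $C$ as a loop based at~$v$ gives a single singular $1$-simplex $\sigma_C$ with $\partial\sigma_C=0$ (so $\sigma_C\in Z'_1(X)$) and $f(\sigma_C)$ equal to the edge-function of~$C$. A thin family of circles yields a family $(\sigma_{C_k})$ that is locally finite in $X$---thinness bounds the circles through each edge, and local finiteness of the graph bounds the edges at each vertex---so $\sum_k\sigma_{C_k}\in Z_1(X)$ maps to the prescribed thin sum. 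Thus $\bar f$ is onto. Naturality is immediate, since $f$ is defined intrinsically from passes through edges and therefore commutes with the induced maps.

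The crux is injectivity, together with the ``moreover'' clause. My plan is to prove first that every $1$-cycle is homologous, in our homology, to a cycle whose reduced representation is a single loop; this simultaneously yields finite representatives and reduces injectivity to the case of a single loop. This reduction is where the ends do their essential work: an infinite cycle that does not visibly close up inside~$X$ (as with the chain $\psi$ of the double ladder) gets closed up through the ends of $\hat X$, so that infinitely many finite pieces are reassembled into one loop subdivided infinitely often. It then remains to show that a loop $\sigma$ with $f(\sigma)=0$ is a boundary. Since homology is abelian, for a loop staying inside the graph the vanishing of its edge-function is exactly the vanishing of its class in the ordinary homology of the graph, so such a loop bounds a finite $2$-chain; the only genuine difficulty is with loops that run through ends.

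And here lies the main obstacle. Because $\C(X)$ is a \emph{proper} quotient of the singular group of $\hat X$, there exist loops with vanishing edge-function that do \emph{not} bound any finite singular $2$-chain in $\hat X$; for the theorem to hold, these loops must nonetheless bound in $H_1(X)$. The resolution is to exploit the extra filling power of our infinite chains: a loop with zero net passes should be filled by a locally finite, possibly infinite, $2$-chain that is permitted to accumulate at ends---precisely the fillings that finite singular homology of $\hat X$ forbids. Constructing this filling explicitly, and verifying that the resulting $2$-chain is admissible and has $\sigma$ as its boundary, is the technically demanding step on which the whole isomorphism turns.
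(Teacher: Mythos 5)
Note first that the paper does not actually prove Theorem~\ref{oldH1isC}: it is imported wholesale from~\cite{Hom1}, so the only internal material to compare against is the sketch of the analogous statement for the new homology, Theorem~\ref{thm:Hngraph}~(ii). Measured against that, your architecture is the intended one: the edge-counting map $f$ (the paper realises it as a winding number via the maps $f_e\colon\hat X\to S^1$, which is a cleaner way of making your ``crossing number over the midpoint'' precise), well-definedness from admissibility condition~(i) applied to an interior point of each edge, membership in $\C(X)$ via Lemma~\ref{orthogonal}, vanishing on boundaries, and surjectivity by tracing circles as loops based at vertices are all correct and essentially routine, and your local-finiteness argument for a thin family of circles (thinness at edges, local finiteness of $G$ at vertices) is sound.

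The genuine gap is that the two steps carrying all of the mathematical weight are announced rather than proved. (1)~The reduction of an arbitrary $1$-cycle --- in the Section~\ref{sec:old} sense, an admissible sum of finite cycles --- to a single loop is itself a substantial theorem of~\cite{Hom1}; it is exactly the point at which the ends intervene (your double-ladder remark identifies the phenomenon but not the mechanism), and without it you have neither the ``moreover'' clause nor the reduction of injectivity to loops. (2)~For injectivity you must fill a loop $\sigma$ with $f(\sigma)=0$ by an \emph{admissible, possibly infinite} $2$-chain. You correctly diagnose that finite fillings cannot exist in general (since $\C(X)$ is a proper quotient of the singular $H_1(\hat X)$), but you stop at ``constructing this filling explicitly \dots is the technically demanding step''. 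The missing construction is the infinite subdivision of $\sigma$ into its (finitely many per edge, hence locally finitely many in $X$) passes through edges by a locally finite family of $2$-simplices, after which passes through $\ve$ and $\ev$ cancel in pairs because the net count is zero; this is precisely what the paper later cites as \cite[Lemma~20]{Hom1} in the proof of Theorem~\ref{thm:Hngraph}~(ii). You have located the crux correctly, but a proof that defers both the single-loop reduction and the filling construction has not proved the theorem; everything you do establish is the easy part.
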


%The isomorphism $f$ in Theorem~\ref{oldH1isC} counts, for each oriented edge $e$ of $G$, how often the simplices in a representative of a given homology class traverse it, and lets the image of this class assign this sum to $e$. See~\cite{Hom1} for a formal definition
%of $f$.

While Theorem~\ref{oldH1isC} shows that the homology defined in this section succeeds in capturing $\C(G)$, it is not a homology theory: It fails to allow for long exact sequences as demanded by the axioms. To see this, let $A\subset X$ consist of a single point $a$ in $X$ and assume there is a path $\pi$ in $\hat X$ from $a$ to an end. The $0$-chain $c=-\sigma$ in $A$, where $\sigma:\{0\}\to A$, is a $0$-cycle whose homology class in $H_0(A)$ lies in the kernel of $\iota_*:H_0(A)\to H_0(X)$ (because $c=\partial\tau$ for $\tau = \sum_{i=1}^{\infty}\pi\restr[1-2^{1-i},1-2^{-i}]$) but not in the image of $\partial_1:H_1(X,A)\to H_0(A)$ (because clearly no finite $1$-cycle in $X$ can have boundary $c$, and no infinite $1$-cycle in $X$ that is a sum of finite cycles can have boundary $c$, since by Condition~(i) only finitely many of those finite cycles meet $a$). Hence the long sequence for the pair $(X,A)$ fails to be exact at $H_0(A)$.

\section{A new homology for locally compact spaces with ends}\label{sec:new}

In this section we define a homology theory that implements the same ideas as our ad-hoc homology of Section~\ref{sec:old}, but which will satisfy all the usual axioms. To achieve this, we shall encode all the properties we shall need into the definition of chains---rather than restricting both chains and cycles, as in Section~\ref{sec:old}. Our homology will also be defined for disjoint unions of compactifications,%
   \COMMENT{}
i.e.\ for $X=\bigsqcup X_k$ and $\hat X=\bigsqcup \hat X_k$ where each $\hat X_k$ is a compactification of $X_k$. Nevertheless, we will start with the definition for compact $\hat X$ and then extend it to unions of compactifications.%
   \COMMENT{}

Let $X$ be a locally compact Hausdorff space, and let $\hat X$ be a Hausdorff compactification of $X$. We define admissible families and $n$-sums as in Section~\ref{sec:old}. All other notation will now be defined differently.

In order to capture $\C(G)$ in dimension $1$ for locally finite graphs, we have to consider chains consisting of infinitely many simplices. On the other hand, if one allows infinite chains without further restrictions, one obtains cycles like $\phi$ in Figure~\ref{fig:double}, which does not correspond to an element of $\C(G)$. The solution to this dilemma is to allow infinitely many simplices only if they are of a certain type.

Call a singular $n$-simplex $\sigma$ in $\hat X$ \emph{degenerate} if it is lower dimensional in the following sense: There is a compact Hausdorff space $X_{\sigma}$ of dimension at most $n-1$ such that $\sigma$ can be written as the composition of continuous maps $\Delta^n \to X_{\sigma} \to \hat X$.%
   \COMMENT{}
Recall that a normal space\footnote{Note that $X_{\sigma}$ is normal as it is compact and Hausdorff.} has dimension $k$ if and only if every (finite) open covering $\U$ has a refinement $\U'$ for which every point lies in at most $k+1$ sets of $\U'$, and $k$ is the least such number.%
   \COMMENT{}

As the empty space is the only space of dimension $-1$, and every $0$-dimensional space is totally disconnected, we have that no singular $0$-simplex is degenerate and a singular $1$-simplex is degenerate if and only if it is constant.

Denote by $C'_n(X)$ the group of equivalence classes of $n$-sums.%
   \COMMENT{}
(Recall that two $n$-sums are called equivalent if every $n$-simplex appears equally often---taking account of the multiplicities $\lambda_i$---in both sums.) As before, the elements of a class $c\in C'_n(X)$ are its \emph{representations}, its unique (up to re-indexing) representation $\sum\lambda_i\sigma_i$ with pairwise distinct $\sigma_i$ is the \emph{reduced representation} of $c$. Sums in $C'_n(X)$ are (well) defined in the obvious way as the equivalence class of the sum of any choice of representations of each of the summands. We call $c$ \emph{good} if the simplices $\sigma_i$ in its reduced representation are degenerate for all but finitely many $i\in I$. An \emph{$n$-chain in $X$} is an equivalence class $c\in C'_n(X)$ that can be written as $c = c_1+\partial c_2$,%
   \COMMENT{}
where both $c_1\in C'_n(X)$ and $c_2\in C'_{n+1}(X)$ are good. In other words, $c$ is an $n$-chain if and only if it has a representation $\sum_{i\in I}\lambda_i\sigma_i$ for which $I$ is the disjoint union of a finite set $I_0$, a (possibly infinite) set $I_1$, and finite sets $I_j,~j\in J,$ such that each $\sigma_i,~i\in I_1,$ is degenerate, and each sum $\sum_{i\in I_j}\lambda_i\sigma_i$ is the boundary of a degenerate singular ($n+1$)-simplex. We call such a representation a \emph{standard representation} of $c$. Note that a standard representation will not, in general, be a reduced representation, and vice versa, a reduced representation does not have to be standard.%
   \COMMENT{}

We write $C_n(X)$ for the group of all $n$-chains in~$X$. As usual, we write $Z_n(X):=\ke\partial_n$ and $B_n(X):=\im\partial_{n+1}$. The elements of $Z_n$ are \emph{$n$-cycles}, those of $B_n$ are \emph{boundaries}. Clearly, $B_n\sub Z_n$, so we can define the \emph{homology groups} $H_n(X):=Z_n/B_n$ as usual.

Since a cycle $c_1+\partial c_2$ as above represents the same homology class as $c_1$ does, we have at once:
\begin{proposition}\label{representHn}
  Every homology class is represented by a good $n$-cycle.%
   \COMMENT{}
\end{proposition}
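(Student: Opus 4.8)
The plan is to start from an arbitrary homology class, pick any $n$-cycle representing it, and then strip off a boundary using the standard representation of that cycle so as to land on a good cycle in the same class.

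First I would take a class $h\in H_n(X)$ and choose any $c\in Z_n(X)$ with $[c]=h$. Since $c$ is an $n$-chain, it has a standard representation: we may write $c=c_1+\partial c_2$ with $c_1\in C'_n(X)$ and $c_2\in C'_{n+1}(X)$ both good. The key observation is that a good element of $C'_{n+1}(X)$ is automatically an $(n+1)$-chain, because $c_2=c_2+\partial 0$ is itself a standard representation (with empty boundary part). Hence $c_2\in C_{n+1}(X)$, and therefore $\partial c_2\in B_n(X)=\im\partial_{n+1}$. Consequently $c$ and $c_1=c-\partial c_2$ differ by a boundary, so $[c_1]=[c]=h$.

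It then remains to verify that $c_1$ is a good $n$-cycle. It is good by construction, and it is a cycle because $\partial c_1=\partial c-\partial\partial c_2=0$, using $\partial c=0$ (as $c\in Z_n(X)$) together with $\partial\partial=0$. Thus $c_1$ is a good $n$-cycle representing $h$, which is exactly what is claimed.

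The whole argument is essentially a matter of unwinding the definitions, so I do not anticipate a genuine obstacle. The one point that requires a moment's care is confirming that $\partial c_2$ really lies in $B_n(X)$ rather than being merely a formal boundary of some element of $C'_{n+1}(X)$; this rests precisely on the fact, noted above, that good elements of $C'_{n+1}(X)$ are already $(n+1)$-chains, so that $\partial_{n+1}$ is defined on $c_2$ and its image counts as a boundary.
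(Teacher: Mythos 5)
Your proof is correct and is essentially the paper's own argument: the paper disposes of this proposition in one line by noting that a cycle $c=c_1+\partial c_2$ (with $c_1,c_2$ good, as in the definition of an $n$-chain) is homologous to the good cycle $c_1$. Your added verifications---that $c_2$ is itself an $(n+1)$-chain so that $\partial c_2\in B_n(X)$, and that $\partial c_1=0$---are exactly the points the paper leaves implicit.
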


As no singular $0$-simplex is degenerate, this means that every homology class in $H_0(X)$ is represented by a finite $0$-cycle. Moreover, as every degenerate $1$-simplex is constant and hence equivalent, as a $1$-sum, to the boundary of a constant (and thus degenerate) $2$-simplex, we have the same in dimension $1$:%
   \COMMENT{}
\begin{proposition}\label{representH1}
  Every homology class in $H_0(X)$ or in $H_1(X)$ is represented by a finite cycle.%
   \COMMENT{}
\end{proposition}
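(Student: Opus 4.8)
The plan is to deduce Proposition~\ref{representH1} from Proposition~\ref{representHn} by analysing, in each of the two dimensions separately, what a \emph{good} cycle can look like. By Proposition~\ref{representHn}, every homology class is represented by a good $n$-cycle~$c$, i.e.\ by a cycle whose reduced representation $\sum_{i\in I}\lambda_i\sigma_i$ has all but finitely many $\sigma_i$ degenerate. The task is therefore to show that, in dimensions $0$ and~$1$, the degenerate part contributes nothing essential, so that $c$ is in fact homologous to a finite cycle (indeed, we aim to show it \emph{is} finite, or differs from a finite chain only by a boundary).

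For $H_0(X)$ the argument should be immediate: the excerpt already records that no singular $0$-simplex is degenerate. Hence in a good $0$-cycle only finitely many indices $i$ can carry a degenerate simplex, but since \emph{none} is degenerate, the reduced representation must have finite index set~$I$ altogether. So a good $0$-cycle is already a finite cycle, and we are done in dimension~$0$ with no further work.

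For $H_1(X)$ the degenerate simplices are exactly the constant $1$-simplices, so a good $1$-cycle $c$ has the form (finite chain) $+$ (possibly infinite sum of constant $1$-simplices). The key observation, flagged in the running text just before the proposition, is that a constant $1$-simplex $\kappa$ based at a point $p$ is equal, as a $1$-sum, to the boundary $\partial\tau$ of the constant $2$-simplex $\tau$ at~$p$ (by the usual formula $\partial\tau=\tau\restr[v_1,v_2]-\tau\restr[v_0,v_2]+\tau\restr[v_0,v_1]$, all three faces being the same constant simplex $\kappa$, so $\partial\tau=\kappa$). Thus each constant summand of $c$ is a boundary, and $\tau$ is itself degenerate. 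The plan is to collect the (at most countably many) constant summands of $c$ into $\partial c_2$ for a good $2$-chain $c_2$ built from these constant $2$-simplices, so that $c$ differs by a boundary from its finite, non-degenerate part $c_1$. Then $[c]=[c_1]$ in $H_1(X)$ with $c_1$ finite, as required.

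The step I expect to require the most care is verifying that the passage from the infinitely many constant summands to the $2$-chain $c_2$ stays within our framework, i.e.\ that $\partial c_2$ really equals the constant part of~$c$ as an $n$-chain and that $c_1$ is a genuine cycle. Concretely: one must check that $c_1 = c - \partial c_2$ lies in $Z_1(X)$ (it does, since $c\in Z_1$ and $\partial c_2\in B_1\subseteq Z_1$), that $c_1$ is finite (it is the non-degenerate part of the reduced representation of a good cycle), and that the bookkeeping of which constant simplices occur how often matches up under the equivalence of $n$-sums. Given the preparatory remarks already in the excerpt, each of these is routine rather than deep, but the matching of multiplicities in the two representations of $c$ is the place where one should be explicit.
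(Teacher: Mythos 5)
Your proposal is correct and follows essentially the same route as the paper: the paper derives the proposition from Proposition~\ref{representHn} by noting that no $0$-simplex is degenerate (so a good $0$-cycle is finite) and that every degenerate $1$-simplex is constant, hence equal as a $1$-sum to the boundary of a constant (degenerate) $2$-simplex, so the infinite constant part can be absorbed into a boundary of a good $2$-chain. Your extra care about admissibility of the family of constant $2$-simplices and the multiplicity bookkeeping is exactly the routine verification the paper leaves implicit.
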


Let us now define relative homology groups. Consider a closed subset $A$ of $X$ and write $\hat A$ for the closure of $A$ in $\hat X$. In order to make all the axioms work, we additionally require the boundary of $\hat A$ in $\hat X$ to be a (compact) subset of $X$. In the case of graphs and their Freudenthal compactification, this is the case for instance if $A$ is a component of the graph minus a finite vertex set. In infinite graph theory, it is an often used procedure to contract such components, so it does not seem too restrictive to only consider such subsets. We call $(X,A)$ an \emph{admissible pair}. Like in Section~\ref{sec:old}, we obtain that admissible families of simplices in $A$ are admissible also in $X$. Now let $C_n(X,A)=C_n(X)/C_n(A)$, let $Z_n(X,A)$ be the kernel of the quotient map $C_n(X,A)\to C_{n-1}(X,A)$ of $\partial_n$, and $B_n(X,A)$ the image of the quotient map $C_{n+1}(X,A)\to C_n(X,A)$ of $\partial_{n+1}$, and define $H_n(X,A)=Z_n(X,A)/B_n(X,A)$.%
   \COMMENT{}

Having defined the homology groups for compactifications, we now extend it to disjoint unions of compactifications as follows: If $X=\bigsqcup X_k$, $\hat X=\bigsqcup \hat X_k$, and $A$ is a closed subspace of $X$ such that for each $k$ the pair $(X_k,A_k)$ is admissible, where $A_k\assign A\cap X_k$, we call $(X,A)$ an \emph{admissible pair}. For an admissible pair $(X,A)$, define $C_n(X,A)$ as the direct sum $\bigoplus C_n(X_k,A_k)$.%
   \COMMENT{}
(Note that each $A_k$ is closed in $X_k$.) The homology groups $H_n(X)$ and $H_n(X,A)$ are then defined in the obvious way.

Our earlier definitions of admissible families, $n$-sums, and $n$-chains for compact $\hat X$ also extend naturally to disjoint unions $X=\bigsqcup X_k$ as follows: A family of singular $n$-simplices in $X$ is \emph{admissible} if its subfamily of simplices in $X_k$ is admissible for finitely many $k$ and empty for all other $k$.%
   \COMMENT{}
An \emph{$n$-sum in $X$} is a formal sum $\sum\lambda_i\sigma_i$ where $(\sigma_i)$ is an admissible family. The equivalence classes of $n$-sums form a group $C'_n(X)$, an element $c$ of $C'_n(X)$ is \emph{good} if it has a representation in which all but finitely many simplices are degenerate, and an \emph{$n$-chain in $X$} is a class $c\in C'_n(X)$ that can be written as $c=c_1+\partial c_2$ with good $c_1\in C'_n(X)$ and good $c_2\in C'_{n+1}(X)$. It is easy to see that $C_n(X)$, defined earlier as $\bigoplus_k C_n(X_k)$, is indeed the group of $n$-chains in $X$.%
   \COMMENT{}
   
In standard homology, it is trivial that a chain in $X$ all of whose simplices live in $\hat A$ is also a chain in $A$. In our case, this is not immediate: If all simplices in the reduced representation of a chain in $X$ live in $\hat A$, this does not imply directly that there is a \emph{standard} representation that consists of simplices in $\hat A$. Indeed, if there is an infinite admissible family of degenerate ($n+1$)-simplices that do not live in $\hat A$ but whose boundaries do, then the sum of their boundaries is the representation of an $n$-chain in $X$, and all simplices in the reduced representation of this chain live in $\hat A$. But as soon as this reduced representation consists of infinitely many non-degenerate $n$-simplices, we do not know whether it does also represent a chain in $A$. Here it comes in that $(X,A)$ is an admissible pair: As each $\hat A_k$ has a compact boundary that is contained in $X_k$, there is no admissible family as above. An one can indeed show that a chain in $X$ is a chain in $A$ as soon as their simplices live in $\hat A$. More generally, we have the following.

\begin{lemma}\label{chaininA}
  Let $\sum_{i\in I}\lambda_i\sigma_i$ be a reduced representation of a chain in $X$ and let $I'\sub I$ be the set of those indices with $\im\sigma_i\sub\hat A$. Then $\sum_{i\in I'}\lambda_i\sigma_i$ is the reduced representation of a chain in $A$.
\end{lemma}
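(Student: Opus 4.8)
The plan is to realise the passage from $c$ to $\sum_{i\in I'}\lambda_i\sigma_i$ as the application of a single \emph{restriction} homomorphism, and then to analyse what this map does to a decomposition witnessing that $c$ is a chain. Write $r\colon C'_n(X)\to C'_n(A)$ for the map that, on a reduced representation, deletes every simplex whose image is not contained in $\hat A$. Since the pairwise distinct simplices freely generate these groups up to the equivalence of $n$-sums, $r$ is a well-defined homomorphism, and it extends to admissible infinite sums because membership in $\hat A$ is a property of the individual simplex. First I would check that $r$ really lands in $C'_n(A)$: the retained subfamily inherits local finiteness in $X$, hence in $A$, and each retained simplex has its $0$-faces in $X$ (by admissibility in $X$) and in $\hat A$ (being part of its image), hence in $\hat A\cap X=A$, using that $A$ is closed in $X$. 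Thus $\sum_{i\in I'}\lambda_i\sigma_i=r(c)$ is a reduced $n$-sum in $A$, and it remains to show that its class is a \emph{chain} in $A$. For disjoint unions one notes that an admissible family meets only finitely many $X_k$, so it suffices to treat a single compactification $\hat X$.

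Next I would use the hypothesis that $c$ is a chain: fix a decomposition $c=c_1+\partial c_2$ with $c_1\in C'_n(X)$ and $c_2\in C'_{n+1}(X)$ good, and apply $r$ to obtain $r(c)=r(c_1)+r(\partial c_2)$. The term $r(c_1)$ is again good in $A$, since the non-degenerate simplices surviving the restriction form a subset of the finitely many non-degenerate simplices of $c_1$ (degeneracy is intrinsic, so a simplex degenerate in $\hat X$ with image in $\hat A$ is degenerate as a simplex in $\hat A$, by corestricting its factorisation). For $r(\partial c_2)$ I would take the reduced representation $c_2=\sum_k\mu_k\tau_k$, in which all but finitely many $\tau_k$ are degenerate, and compute $r(\partial c_2)=\sum_k\mu_k\,r(\partial\tau_k)$ by linearity, classifying the $\tau_k$ according to how their images meet $\hat A$.

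The crux is this classification. If $\im\tau_k\sub\hat A$ then every face of $\tau_k$ lies in $\hat A$, so $r(\partial\tau_k)=\partial\tau_k$; collecting these gives $\partial c_2'$ for $c_2':=\sum_{\im\tau_k\sub\hat A}\mu_k\tau_k$, which is a good chain in $A$ (its retained family is admissible in $A$, and all but finitely many of its simplices are degenerate). If $\im\tau_k$ is disjoint from $\hat A$ then $r(\partial\tau_k)=0$. The remaining ``mixed'' $\tau_k$ are the only difficulty, and here the admissibility of the pair is decisive: let $\beta$ be the boundary of $\hat A$ in $\hat X$, which by assumption is compact and contained in $X$. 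If some face of $\tau_k$ lies in $\hat A$ while $\im\tau_k\not\sub\hat A$, then the connected set $\im\tau_k$ meets both the open set $\hat X\sm\hat A$ and the set $\hat A$, and hence meets $\beta$ (otherwise it would be split by the two disjoint open sets $\interior\hat A$ and $\hat X\sm\hat A$). Since $(\tau_k)$ is locally finite in $X$ and $\beta\sub X$ is compact, only finitely many $\tau_k$ meet $\beta$, so there are only finitely many mixed indices.

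I would then assemble the pieces as $r(c)=\big(r(c_1)+\sum_{k\ \mathrm{mixed}}\mu_k\,r(\partial\tau_k)\big)+\partial c_2'$. The bracketed term is good in $A$, being a good chain plus a \emph{finite} chain, and $c_2'$ is good in $A$; hence $r(c)$ has the form ``good $+$ boundary of good'' and is therefore a chain in $A$, with reduced representation $\sum_{i\in I'}\lambda_i\sigma_i$ since the retained $\sigma_i$ are pairwise distinct with non-zero coefficients. I expect the finiteness of the mixed indices to be the only genuine obstacle; the remainder is bookkeeping, the one point needing care being that $r$ commutes with $\partial$ and with infinite admissible summation, which holds because $r$ merely selects those faces and simplices whose images lie in $\hat A$.
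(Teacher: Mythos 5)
Your proof is correct and follows essentially the same route as the paper: both arguments reduce to observing that an $(n+1)$-simplex not living in $\hat A$ but contributing a face inside $\hat A$ must meet the boundary of $\hat A$, which is compact and contained in $X$, so that local finiteness leaves only finitely many such ``mixed'' simplices to absorb into the finite part of a standard representation. Your restriction map $r$ and the connectedness argument for why mixed simplices hit the boundary are just a slightly more explicit packaging of the paper's bookkeeping with the index sets $I'_0, I'_1, J'$ and $K$.
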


\begin{proof}
  Let $c$ be the $n$-chain in $X$ represented by $\sum_{i\in I}\lambda_i\sigma_i$. Choose a standard representation of $c$, i.e.
  \begin{equation}\label{representc}
    c = \sum_{i\in I_0}\lambda_i\sigma_i + \sum_{i\in I_1}\lambda_i\sigma_i + \sum_{j\in J}\lambda_j\partial\tau_j,
  \end{equation}
  where $I_0$ is finite and each simplex $\sigma_i,~i\in I_1,$ and $\tau_j,~j\in J,$ is degenerate. Note that not all simplices occurring in this representation have to live in $\hat A$, this only has to hold for the $n$-simplices that are part of the reduced representation of $c$.

  Let $I'_0\sub I_0$, $I'_1\sub I_1$, and $J'\sub J$ be the index sets of those simplices that live in $\hat A$. Let further $(\sigma_k)_{k\in K}$ be the family of those $n$-simplices living in $\hat A$ that are a face of some $\tau_j$ with $j\in J\sm J'$, and let $\lambda_k$ be the multiplicity in which $\sigma_k$ occurs in the sum $\sum_{j\in J\sm J'}\lambda_j\partial\tau_j$. Note that $K$ is finite since every $\tau_j,~j\in J\sm J'$, with a face $\sigma_k,~k\in K$, meets the compact boundary of $\hat A$ and $(\tau_j)_{j\in J\sm J'}$ is admissible. Now the $n$-sum
  \begin{equation*}
    \sum_{i\in I'_0}\lambda_i\sigma_i + \sum_{i\in I'_1}\lambda_i\sigma_i + \sum_{j\in J'}\lambda_j\partial\tau_j + \sum_{k\in K}\lambda_k\sigma_k
  \end{equation*}
  is a standard representation of a chain $c'$ in $A$, and by construction the reduced representation of $c'$ is precisely $\sum_{i\in I'}\lambda_i\sigma_i$.%
   \COMMENT{}
\end{proof}

Before we show in the following section that this is indeed a homology theory let us first note that, applied to a locally finite graph in dimension $1$, it captures precisely its cycle space:

\begin{theorem}\label{newH1isC}
  If $X$ is a locally finite connected graph and $\hat X$ is its Freudenthal compactification, then there is a canonical isomorphism $H_1(X)\to\C(X)$.
\end{theorem}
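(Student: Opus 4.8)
The plan is to construct the isomorphism explicitly and then verify it is a well-defined bijective homomorphism. Recall that a $1$-chain $c$ in $X$ has a reduced representation as a (possibly infinite) admissible family of singular $1$-simplices $\sigma_i$ in $|G|=\hat X$. To each such $c$ I would associate an element $\Phi(c)\in\vEE(G)$ by counting, for each oriented edge $\ve$, the signed number of passes that the simplices of $c$ make through $\ve$, weighted by the multiplicities $\lambda_i$. Concretely, for each $\ve$ define $\Phi(c)(\ve)$ to be the thin sum over $i$ of $\lambda_i$ times the net number of passes of $\sigma_i$ through $\ve$ (forward passes counted positively, backward negatively). The first thing to check is that this is well defined: admissibility (local finiteness in $X$) guarantees that only finitely many $\sigma_i$ meet the interior of any fixed edge $e$, so the sum defining $\Phi(c)(\ve)$ is finite, and $\Phi(c)\in\vEE$. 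I would then note $\Phi$ is visibly a group homomorphism $C_1(X)\to\vEE$, and observe that it kills any boundary $\partial c_2$ coming from a degenerate (hence constant) $2$-simplex, and more generally vanishes on the image of $\partial_2$ by the standard pass-counting argument, so $\Phi$ descends to $H_1(X)\to\vEE$.

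Next I would show the image of $\Phi$ restricted to $Z_1(X)$ lands in $\C(G)$, using Lemma~\ref{orthogonal}: given a $1$-cycle $c$, I must verify that $\Phi(c)$ sums to zero on every finite oriented cut. Since $\partial c=0$, at every vertex $v$ of $X$ the signed number of simplex-endpoints entering equals the number leaving; summing this vertex balance over the (finite) shore $X$ of a finite cut and cancelling interior edges yields exactly the orthogonality condition of Lemma~\ref{orthogonal}, so $\Phi(c)\in\C(G)$. Conversely, for surjectivity I would invoke the structural description of $\C(G)$ as thin sums of oriented circles in $|G|$: each oriented circle in $|G|$ is the homeomorphic image of $S^1$, which I can parametrize as a single loop $\sigma\colon[0,1]\to|G|$ whose $0$-face maps to a vertex (hence is admissible and is a genuine $1$-cycle in our sense since $\partial\sigma=0$), and whose associated element under $\Phi$ is precisely the circle's edge function. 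A thin family of such circles is admissible (thinness is exactly local finiteness in $X$), so every element of $\C(G)$ is realized.

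For injectivity on homology I would argue that if $c$ is a $1$-cycle with $\Phi(c)=0$ then $c$ is a boundary. Here is where Proposition~\ref{representH1} does the heavy lifting: every class in $H_1(X)$ is represented by a \emph{finite} cycle, so it suffices to prove injectivity for finite cycles $c$. For a finite $1$-cycle with $\Phi(c)=0$, the net pass-count through every edge is zero; combining this with $\partial c=0$, a finite induction (removing a loop whose edge-trace cancels, filling it by a singular $2$-simplex supported on the contractible arcs it traverses) expresses $c$ as a finite boundary in $X$, exactly as in the standard computation of $H_1$ of a graph. Finally, canonicity and naturality follow because $\Phi$ is defined purely from the cell structure of $G$ via passes, with no arbitrary choices.

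The main obstacle I anticipate is surjectivity together with the reconciliation between passes and the intrinsic combinatorial edge-function of a topological circle in $|G|$: a singular simplex mapping into $|G|$ may wander through end-points, reverse direction inside an edge, or traverse an edge only partially, so I must argue carefully that the net pass count is an honest integer-valued homomorphism and that it agrees, on the loops representing circles, with the $\C(G)$-description. The degenerate-simplex mechanism (Proposition~\ref{representH1}, reducing to finite representatives) is what tames the infinite part, so I expect the genuinely delicate point to be the finite-level verification that pass-counting both respects boundaries and, conversely, detects exactly the finite boundaries—that is, the finite-graph computation showing $\ke\Phi\cap Z_1=B_1$.
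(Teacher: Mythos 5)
Your overall strategy (pass-counting homomorphism, image in $\C(X)$ via Lemma~\ref{orthogonal}, surjectivity via circles, injectivity via subdivision into passes) matches the paper's in spirit, but two steps have genuine gaps. The more serious one is surjectivity. You take a thin family of circles, parametrize each as a loop $\sigma_j$ based at a vertex, and conclude that the admissible $1$-sum $\sum_j\sigma_j$ ``realizes'' the given element of $\C(X)$. But in this homology an admissible $1$-sum with zero boundary is \emph{not} automatically a chain: membership in $C_1(X)$ requires a decomposition $c_1+\partial c_2$ with $c_1$ and $c_2$ good, and this is a real restriction even in dimension~$1$ --- the paper's own example $\phi=\sum\theta_{e_n}$ in the double ladder is an admissible $1$-sum with $\partial\phi=0$ that is not a chain (if it were, it would be a cycle mapping to an edge function outside $\C(X)$). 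An infinite thin family of pairwise disjoint non-degenerate circle-loops is exactly the kind of object for which the chain condition must be checked and is not checked by your argument. The paper avoids this by citing surjectivity of the classical singular map $H_1(\hat X)\to\C(X)$ (\cite[Lemma~12]{Hom1}), which produces a \emph{finite} singular representative for each element of $\C(X)$; finite $1$-sums are trivially chains.

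The second gap is in injectivity. You propose a ``finite induction \dots exactly as in the standard computation of $H_1$ of a graph'', but a single simplex of a finite $1$-cycle in $\hat X$ may run through ends and traverse infinitely many edges, so decomposing $c$ into passes is an inherently infinite process: one must add the boundary of an \emph{infinite} admissible family of $2$-simplices (\cite[Lemma~20]{Hom1}). That this is permitted is precisely because $\hat X$ is $1$-dimensional, so every $2$-simplex is degenerate and the infinite $2$-sum is good; this is the one place where the new chain definition does essential work, and your sketch (which you do flag as the delicate point, but do not resolve) omits it. Two smaller points: the shore of a finite oriented cut need not be finite, so your vertex-balance summation needs the reduction to finite representatives (Proposition~\ref{representH1}) before it is literally a finite sum; and well-definedness of the pass count per simplex (finitely many passes through a fixed edge) deserves a one-line compactness argument. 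The paper's definition of $f$ via the winding number of $\fes(c')$ in $S^1$ packages these verifications more cleanly than raw pass-counting.
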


We will prove Theorem~\ref{newH1isC}, in fact a stronger statement,
in Section~\ref{sec:graphs} using results from
Section~\ref{sec:axioms}.

\section{Verification of the axioms}\label{sec:axioms}

In this last section we show that the homology theory we defined satisfies the axioms cited at the end of Section~\ref{sec:term}. As a preliminary step we have to show that continuous functions between spaces induce homomorphisms between their homology groups. This will not work for arbitrary continuous functions: As we distinguish between ends and other points, our functions will have to respect this distinction.

Let locally compact Hausdorff spaces $X,\hat X$ and $Y,\hat Y$ be given, where $X=\bigsqcup X_k$ and $\hat X=\bigsqcup\hat X_k$ with $\hat X_k$ a compactification of $X_k$, and similarly for $Y=\bigsqcup Y_l$. Let $A\sub X$ and $B\sub Y$ be closed subspaces such that $(X,A)$ and $(Y,B)$ are admissible pairs. As before, we write $\hat A$ and $\hat B$ for the closures of $A$ in $\hat X$ and $B$ in $\hat Y$, and note that $\hat A\sm A\sub \hat X\sm X$ and $\hat B\sm B\sub \hat Y\sm Y$. Let us call a continuous function $f:\hat X\to\hat Y$ a \emph{standard map} if $f(X)\subset Y$ and $f(\hat X\sm X)\subset \hat Y\sm Y$.%
   \COMMENT{}
If, in addition, $f(A)\subset B$, we write $f:(X,A)\to(Y,B)$. (As before, we refer to $X$ even though the functions live on $\hat X$.)

Let us show that every standard map $f:(X,A)\to(Y,B)$ induces a homomorphism $f_*:H_n(X,A)\to H_n(Y,B)$, defined as follows. For a homology class $[c]\in H_n(X,A)$, choose a standard representation $\sum_{i\in I}\lambda_i\sigma_i$ of $c$ and map $[c]$ to the homology class in $H_n(Y,B)$ that contains the $n$-cycle represented by $\sum_{i\in I}\lambda_i f\sigma_i$. In ordinary singular homology this map is always well defined. To see that it is well defined in our case, note first that $f$ preserves the equivalence of sums, maps boundaries to boundaries, and maps degenerate simplices to degenerate simplices. Hence all that remains to check is that $f$ maps chains to chains. The following lemma implies that it does:

\begin{lemma}\label{standardgood}
  For every standard map $f:(X,A)\to(Y,B)$, if $(\sigma_i)_{i\in I}$ is an admissible family of $n$-simplices in $\hat X$ (resp.\ $\hat A$), then $(f\sigma_i)_{i\in I}$ is an admissible family of $n$-simplices in $\hat Y$ (resp.\ $\hat B$).
\end{lemma}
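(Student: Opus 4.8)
The plan is to verify the two defining conditions of admissibility in turn, treating first the case of single compactifications $\hat X,\hat Y$ and afterwards reducing the disjoint-union case to it. Condition~(ii) is immediate: since each $\sigma_i$ sends the $0$-faces of $\Delta^n$ into $X$ and a standard map satisfies $f(X)\sub Y$, the composite $f\sigma_i$ sends them into $Y$; for the parenthetical version in $\hat A,\hat B$ one uses $f(A)\sub B$ instead. So all the content lies in condition~(i), the local finiteness of $(f\sigma_i)$ in $Y$.

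For~(i) I would invoke the reformulation recorded just after the definition of admissibility: since $X$ is locally compact, it suffices to show that every compact $L\sub Y$ meets $\im f\sigma_i$ for only finitely many $i$. Now $f\sigma_i$ meets $L$ precisely when $\im\sigma_i$ meets $f^{-1}(L)$, so the whole assertion reduces to the single claim that $f^{-1}(L)$ is a \emph{compact subset of $X$}. Granting this, admissibility of $(\sigma_i)$ finishes the argument at once, because only finitely many $\sigma_i$ can meet a compact subset of~$X$.

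The decisive step is establishing that claim. Compactness is easy: $L$ is compact and hence closed in the Hausdorff space $\hat Y$, so $f^{-1}(L)$ is closed in $\hat X$ by continuity, and therefore compact because $\hat X$ is compact. Containment in $X$ is where the definition of a standard map is indispensable: if some $x\in f^{-1}(L)$ were an end, $x\in\hat X\sm X$, then $f(\hat X\sm X)\sub\hat Y\sm Y$ would force $f(x)\in\hat Y\sm Y$, contradicting $f(x)\in L\sub Y$. Hence $f^{-1}(L)\sub X$, and the claim follows. This is the main obstacle and the only point at which both the compactness of $\hat X$ and the full end/non-end condition on $f$ are used. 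For the version with $A,B$ the identical argument applied to $f^{-1}(L)\cap\hat A$, together with $\hat A\sm A\sub\hat X\sm X$, exhibits $f^{-1}(L)\cap\hat A$ as a compact subset of $A$, which is what is needed there.

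The passage to disjoint unions is then routine bookkeeping. For $X=\bigsqcup X_k$, $\hat X=\bigsqcup\hat X_k$ and $\hat Y=\bigsqcup\hat Y_l$, each $\im\sigma_i$ is connected and so lies in a single $\hat X_k$, and likewise each $\im f\sigma_i$ lies in a single $\hat Y_l$. Only finitely many $\hat X_k$ carry simplices of the family, and each $f(\hat X_k)$ is compact, hence meets only finitely many of the relatively open pieces $\hat Y_l$; so $(f\sigma_i)$ is supported on finitely many $\hat Y_l$ and empty on the rest. Local finiteness within a fixed $Y_l$ follows by applying the crux above to a compact $L\sub Y_l$: the set $f^{-1}(L)$ is compact and contained in $X$, hence meets only finitely many $X_k$, and on each of those finitely many pieces the admissibility of the corresponding subfamily of $(\sigma_i)$ bounds the number of simplices meeting $f^{-1}(L)$, and thus the number of $f\sigma_i$ meeting $L$. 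The $\hat A,\hat B$ case goes through mutatis mutandis, so the lemma follows.
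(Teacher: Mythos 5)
Your proposal is correct and follows essentially the same route as the paper: both reduce condition~(i) to the observation that the preimage of a compact subset of $Y$ under a standard map is closed in $\hat X$, contained in $X$ (this is where $f(\hat X\sm X)\sub\hat Y\sm Y$ enters), and hence has compact intersection with each $\hat X_k$, after which admissibility of $(\sigma_i)$ finishes the argument. The only cosmetic difference is that you separate the single-compactification and disjoint-union cases, where you should say only that $f^{-1}(L)\cap\hat X_k$ is compact for each $k$ rather than that $f^{-1}(L)$ itself is compact (which may fail when $\hat X$ is an infinite disjoint union), but your argument does not actually depend on the stronger claim.
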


\begin{proof}
  As $f$ is standard and $(\sigma_i)_{i\in I}$ is admissible, every $f\sigma_i$ maps the $0$-faces of $\Delta^n$ to $Y$. It therefore remains to show that every $y\in Y$ has a neighbourhood that meets the image of $f\sigma_i$ for only finitely many $i$. Let $U$ be a compact neighbourhood of $y$ in $Y$, its preimage $f^{-1}(U)$ is a subset of $X=\bigsqcup X_k$ that is closed in $\hat X$ as $f$ is continuous.%
   \COMMENT{}
Hence $f^{-1}(U)\cap\hat X_k$ is compact for each $k$.%
   \COMMENT{}
Since $(\sigma_i)_{i\in I}$ is admissible, it contains simplices in only finitely many~$\hat X_k$, and as the subfamilies of simplices in those $\hat X_k$ are admissible, only finitely many $\sigma_i$ meet $f^{-1}(U)$.%
   \COMMENT{}
Hence $U$ meets the image of $f\sigma_i$ for only finitely many $i$ and hence $(f\sigma_i)_{i\in I}$ is admissible. The analogous claim for $A$ and $B$ follows as $f(A)\subset B$.
\end{proof}

By Lemma~\ref{standardgood} the map $\sum_{i\in I}\lambda_i\sigma_i \mapsto \sum_{i\in I}\lambda_i f\sigma_i$ defines a homomorphism \begin{equation*}
  \fs:C_n(X,A)\to C_n(Y,B)
\end{equation*}
with $\partial\fs=\fs\partial$. Thus every standard map $f:(X,A)\to(Y,B)$ induces a homomorphism $f_*:H_n(X,A)\to H_n(Y,B)$. It is easy to see that if $g:(Y,B)\to(Z,C)$ is another standard map we have $(fg)_*=f_*g_*$, and that $\id_*=\id$.

We thus have shown that our homology admits induced homomorphisms if the continuous functions satisfy the natural condition that they map ends to ends and points in $X$ to points in $Y$. We now show that, subject only to similarly natural constraints,%
   \COMMENT{}
our homology satisfies the axioms for a homology theory.

We will verify the axioms in the order of Section~\ref{sec:term}.

\begin{theorem}[Homotopy equivalence]\label{generalHomEq}
  If standard maps $f,g:(X,A)\to(Y,B)$ are homotopic via standard maps $(X,A)\to(Y,B)$ then $f_*=g_*$.
\end{theorem}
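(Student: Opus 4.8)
The plan is to run the classical prism (chain-homotopy) argument from singular homology and then verify that, in our setting, the prism operator respects the extra structure built into our chains. Let $H\colon\hat X\times[0,1]\to\hat Y$ be a homotopy through standard maps, so that $H_0=f$, $H_1=g$, each $H_t$ is standard, and (for the relative statement) $H(A\times[0,1])\subset B$. For a singular $n$-simplex $\sigma$ in $\hat X$ I would define $P\sigma\in C'_{n+1}(Y)$ by the usual formula: compose $H\circ(\sigma\times\mathrm{id})\colon\Delta^n\times[0,1]\to\hat Y$ with the standard triangulation of the prism $\Delta^n\times[0,1]$ into $(n+1)$-simplices $[v_0,\dots,v_i,w_i,\dots,w_n]$, with signs $(-1)^i$, and extend $P$ linearly over representations. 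Exactly as in ordinary homology (e.g.\ \cite{Hatcher}), the prism identity $\partial P+P\partial=\gs-\fs$ then holds simplex by simplex, and hence on all of $C'_n(X)$.

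Before using this identity I must check that $P$ is well defined as a map $C'_n(X)\to C'_{n+1}(Y)$; since it acts simplexwise it respects equivalence of sums, so the point is that $(P\sigma_i)$ is admissible whenever $(\sigma_i)$ is. The $0$-faces of each prism simplex map to $f$- and $g$-images of $0$-faces of the $\sigma_i$, which lie in $Y$ because $f,g$ are standard. For local finiteness, given $y\in Y$ I pick a compact neighbourhood $U\subset Y$; then $H^{-1}(U)$ is closed in $\hat X\times[0,1]$, and since each $H_t$ is standard we have $H^{-1}(Y)=X\times[0,1]$, so $H^{-1}(U)$ is a compact subset of $X\times[0,1]$. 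Its projection to $\hat X$ is a compact subset of $X$, which by admissibility of $(\sigma_i)$ meets only finitely many $\im\sigma_i$; hence $U$ meets only finitely many $\im P\sigma_i$. This is the same argument as in Lemma~\ref{standardgood}, and it handles the disjoint-union case in the same way.

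The key structural point is that $P$ sends degenerate simplices to degenerate ones. If $\sigma$ factors as $\Delta^n\to X_\sigma\to\hat X$ with $X_\sigma$ compact Hausdorff of dimension at most $n-1$, then $H\circ(\sigma\times\mathrm{id})$ factors through $X_\sigma\times[0,1]$, which is compact Hausdorff of dimension at most $n$; hence every prism simplex in $P\sigma$ factors through a compact space of dimension at most $(n+1)-1$ and is therefore degenerate. I expect this to be the main obstacle, since it rests on the dimension-theoretic fact that taking the product with $[0,1]$ raises covering dimension by at most one, $\dim(X_\sigma\times[0,1])\le\dim X_\sigma+1$. Granting it, $P$ maps good classes to good classes; and since $\fs,\gs$ also preserve goodness (as noted before Lemma~\ref{standardgood}), the prism identity rewrites $P(c_1+\partial c_2)=Pc_1+\gs c_2-\fs c_2-\partial(Pc_2)$ as a good class plus the boundary of a good class. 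Thus $P$ maps chains to chains, i.e.\ $P\colon C_n(X)\to C_{n+1}(Y)$.

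It then follows for an absolute cycle $c$ that $\gs c-\fs c=\partial Pc+P\partial c=\partial Pc\in B_n(Y)$, giving $f_*=g_*$ on $H_n(X)$. For the relative statement I would pass to the quotients $C_n(X,A)$ and $C_{n+1}(Y,B)$: because $H(A\times[0,1])\subset B$ and $\hat A\times[0,1]$ is the closure of $A\times[0,1]$ in $\hat X\times[0,1]$, continuity gives $H(\hat A\times[0,1])\subset\hat B$, so every simplex occurring in $P$ of a chain in $A$ has its image in $\hat B$. By Lemma~\ref{chaininA}, applied to the admissible pair $(Y,B)$, such a chain is a chain in $B$, so $P$ descends to $\bar P\colon C_n(X,A)\to C_{n+1}(Y,B)$. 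The prism identity descends with it, and for a relative cycle $[c]$ (so that $\partial c\in C_{n-1}(A)$, whence $\bar P\partial[c]=0$) it yields $\gs[c]-\fs[c]=\partial\bar P[c]\in B_n(Y,B)$. Hence $f_*=g_*$ on $H_n(X,A)$.
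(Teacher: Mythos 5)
Your proposal is correct and follows essentially the same route as the paper: the classical prism operator, admissibility of the prism family via preimages of compact neighbourhoods under the homotopy (as in Lemma~\ref{standardgood}), and preservation of degeneracy via $\dim(X_\sigma\times[0,1])\le\dim X_\sigma+1$. The only cosmetic difference is that you show $P$ carries every chain $c_1+\partial c_2$ to a chain, whereas the paper simply works with a good cycle representative supplied by Proposition~\ref{representHn}; both settle the same point.
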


\begin{proof}
  Denote by $F=(f_t)_{t\in[0,1]}$ the homotopy between $f$ and $g$ consisting of standard maps $f_t:(X,A)\to(Y,B)$ and satisfying $f_0=f$ and $f_1=g$. We first consider the absolute groups $H_n(X)$, $H_n(Y)$.

  The main ingredient in the proof of homotopy equivalence for standard singular homology is a decomposition of $\Delta^n\times[0,1]$ into ($n+1$)-simplices $D_0,\dotsc,D_n$ (see eg.~\cite{Hatcher}). This decomposition works as follows: Let $\Delta^n\times\{0\}=[v_0,\dotsc,v_n]$ and $\Delta^n\times\{1\}=[w_0,\dotsc,w_n]$, and put $D_j:=[v_0,\dotsc,v_j,w_j,\dotsc,w_n]$. Each $D_j$ is an ($n+1$)-simplex, and hence the natural map between $\Delta^{n+1}$ and $D_j$ is a homeomorphism which we denote by $\tau_j$.

  In standard singular homology, for an $n$-chain $z=\sum_{i\in I}\lambda_i\sigma_i$ in $X$ one considers the ($n+1$)-chain
  \begin{equation} \label{prism}
    P(z)=\sum_{i\in I}\sum_{j=0}^{n}(-1)^j\lambda_i F\circ(\sigma_i\times\id)\circ\tau_j
  \end{equation}
  in $Y$, where $\sigma\times\id:\Delta^n\times[0,1]\to X\times[0,1]$ is the map $(a,b)\mapsto(\sigma(a),b)$, and then shows that $\partial P(z)+P(\partial z)=\gs(z)-\fs(z)$. If $z$ is an $n$-cycle, then $\gs(z)-\fs(z)=\partial P(z)+P(\partial z)=\partial P(z)$, thus $\gs(z)-\fs(z)$ is a boundary, which means that $\gs$ and $\fs$ take $z$ to the same homology class and hence $f_*([z])=g_*([z])$.
  
  In our case, we first have to show that, given an $n$-chain $z$ in $X$ with representation $\sum_{i\in I}\lambda_i\sigma_i$, the expression $P(z)$ in~\eqref{prism} is indeed an ($n+1$)-sum, i.e.\ that $(F\circ(\sigma_i\times\id)\circ\tau_j)_{i\in I, j\in\{0,\dotsc,n\}}$ is an admissible family of ($n+1$)-simplices in $\hat Y$. Then we have to show that the $c\in C'_{n+1}$ represented by $P(z)$ has a standard representation. If these two claims are true, we will also have $\partial P(z)+P(\partial z)=\gs(z)-\fs(z)$ and hence $f_*([z])=g_*([z])$.
  
  To show that the family $(F\circ(\sigma_i\times\id)\circ\tau_j)_{i\in I, j\in\{0,\dotsc,n\}}$ is admissible, note first that, since $(\sigma_i)_{i\in I}$ is an admissible family of simplices in $\hat X$, their images meet only finitely many $\hat X_k$; let $\hat X^-$ be their (compact) union. Now let $y\in Y$ be given, and choose a compact neighbourhood $U$ of $y$. As $\hat Y$ is Hausdorff, $U$ is closed in $\hat Y$. Consider the preimage of $U$ under $F$. As $U$ is closed and $F$ is continuous, this is a closed subset of $\hat X^-\times[0,1]$, and hence compact. Its projection
  \begin{equation*}
    \tilde U:=\{x\in\hat X \mid \exists t\in[0,1]: F(x,t)\in U\}
  \end{equation*}
  to $\hat X^-$, then, is also compact. Since $U\subset Y$ and each $f_t$ is standard, we have $\tilde U\subset X$, so $\tilde U$ meets $\im\sigma_i$ for only finitely many $i$. And for only those $i$ does $U$ meet the image of any $F\circ(\sigma_i\times\id)\circ\tau_j,~j\in\{0,\dotsc,n\}$. Hence $P(z)$ is an ($n+1$)-sum.%
   \COMMENT{}

  To verify our second claim, let $[z]\in H_n(X)$ be given, and assume without loss of generality that $z$ is good (cf.\ Proposition~\ref{representHn}), i.e.\ it has a representation $\sum_{i\in I}\lambda_i\sigma_i$ such that only finitely many of the $\sigma_i$ are not degenerate. We show that if $\sigma_i$ is degenerate then $F\circ(\sigma_i\times\id)\circ\tau_j$ is degenerate for each $j$; from this it follows directly that $P(z)$ as stated in~\eqref{prism} is a standard representation of an ($n+1$)-chain in $Y$.
  
  Suppose that $\sigma_i$ is degenerate; then there is a compact Hausdorff space $X_{\sigma_i}$ of dimension at most $n-1$, and continuous maps $\alpha:\Delta^n\to X_{\sigma_i}$ and $\beta:X_{\sigma_i}\to \hat X$ with $\sigma_i = \beta\circ\alpha$. Now let $\gamma:\Delta^n\to X_{\sigma_i}\times[0,1]$ be the composition of the natural map from $\Delta^n$ to $D_j\subset \Delta^n\times[0,1]$ and the map $\alpha\times\id$ from $\Delta^n\times[0,1]$ to $X_{\sigma_i}\times[0,1]$. Then $F\circ(\sigma_i\times\id)\circ\tau_j = \beta\circ\gamma$, so all that remains to show is that $X_{\sigma_i}\times[0,1]$ has dimension at most $n$. But this is immediate by the fact that $X_{\sigma_i}$ has dimension at most $n-1$, $[0,1]$ has dimension $1$, and that the dimension of the product of two compact spaces is at most the sum of their dimensions~\cite[Theorem~3.2.13]{Engelking}.

  We thus have $f_*=g_*:H_n(X)\to H_n(Y)$. As $P$ takes sums in $A$ to sums in $B$, the formula $\partial P+P\partial=\gs-\fs$ remains valid also for relative chains, and thus we also have $f_*=g_*:H_n(X,A)\to H_n(Y,B)$.
\end{proof}

\begin{theorem}[The Long Exact Sequence of a Pair]\label{exact}
  There are boundary homomorphisms $\partial:H_n(X,A)\to H_{n-1}(A)$ such that
  \begin{equation*}
    \begin{xy}
      \xymatrix{
	\dotsb \ar[r]^{\partial\quad} & H_n(A) \ar[r]^{\iota_*} & H_n(X) \ar[r]^{\pi_*\;\;} & H_n(X,A) \ar[r]^{\partial} & H_{n-1}(A) \ar[r]^{\quad\iota_*} & \dotsb
      }
    \end{xy}
  \end{equation*}
  is an exact sequence, where $\iota$ denotes the inclusion $(A,\es)\to (X,\es)$ and $\pi$ denotes the inclusion $(X,\es)\to (X,A)$. These boundary homomorphisms are \emph{natural}, i.e.\ given a standard map $f:(X,A)\to(Y,B)$ the diagrams
  \begin{equation*}
    \begin{xy}
      \xymatrix{
        H_n(A,X) \ar[r]^{\partial} \ar[d]^{f_*} & H_{n-1}(A) \ar[d]^{f_*}\\
	H_n(Y,B) \ar[r]^{\partial} & H_{n-1}(B)
      }
    \end{xy}
  \end{equation*}
  commute.
\end{theorem}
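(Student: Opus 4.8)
The plan is to reduce the statement to the purely algebraic zig-zag (snake) lemma by exhibiting, for each fixed admissible pair $(X,A)$, a short exact sequence of chain complexes
\[
0 \to C_*(A) \xrightarrow{\iota_\sharp} C_*(X) \xrightarrow{\pi_\sharp} C_*(X,A) \to 0,
\]
where $\iota_\sharp$ is the embedding that sends a chain in $A$ to the chain in $X$ with the same reduced representation (as in the footnote of Section~\ref{sec:old}) and $\pi_\sharp$ is the quotient map coming from the definition $C_n(X,A)=C_n(X)/C_n(A)$. Once this is in place, the long exact sequence, the connecting homomorphisms $\partial\colon H_n(X,A)\to H_{n-1}(A)$, and their naturality are all formal consequences of the zig-zag lemma (e.g.~\cite{Hatcher}). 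The only genuine work lies in checking that the displayed sequence really is a short exact sequence of chain complexes in our non-standard setting.

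First I would verify that $\partial$ is a well-defined endomorphism of the graded group of chains. If $c=c_1+\partial c_2$ with $c_1,c_2$ good, then $\partial c=\partial c_1$, and since $\partial c=0+\partial c_1$ with $c_1$ good, this is again a standard representation, so $\partial c\in C_{n-1}(X)$; admissibility of the family of faces is inherited from that of $c$, as faces have images contained in the $\im\sigma_i$ and their $0$-faces are among the $0$-faces of the $\sigma_i$. Exactness of the three short sequences is then straightforward: $\iota_\sharp$ is injective because a chain is determined by its reduced representation and only the empty one is trivial; $\pi_\sharp$ is surjective by construction; and $\ker\pi_\sharp=\im\iota_\sharp$ holds by the very definition of $C_n(X,A)$ as the quotient $C_n(X)/C_n(A)$. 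Finally, both maps commute with $\partial$: faces of simplices living in $\hat A$ again live in $\hat A$, so the boundary computed in $A$ agrees under $\iota_\sharp$ with the boundary computed in $X$, while $\pi_\sharp$ commutes with $\partial$ because the relative boundary is by definition induced by $\partial$.

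With the short exact sequence established, the connecting homomorphism is the usual diagram chase: given a relative cycle $c\in Z_n(X,A)$, lift it to $\tilde c\in C_n(X)$; then $\pi_\sharp(\partial\tilde c)=\partial c=0$, so $\partial\tilde c=\iota_\sharp(d)$ for a unique $d\in C_{n-1}(A)$, which is a cycle because $\iota_\sharp(\partial d)=\partial\partial\tilde c=0$ and $\iota_\sharp$ is injective; one sets $\partial[c]:=[d]$, and independence of the choices of $\tilde c$ and representative follows verbatim as in the standard proof. It is here that Lemma~\ref{chaininA} underpins the whole picture: it guarantees that $\im\iota_\sharp$ consists of exactly those chains of $X$ all of whose simplices live in $\hat A$, so that the relative chain group $C_n(X,A)$ has its intended geometric description. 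This is where the admissibility of the pair $(X,A)$---the compact boundary of $\hat A$ inside $X$---is essential.

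For naturality, I would observe that a standard map $f\colon(X,A)\to(Y,B)$ restricts to a standard map $\hat A\to\hat B$, since $f(\hat A\setminus A)\subseteq(\hat Y\setminus Y)\cap\hat B=\hat B\setminus B$; hence by Lemma~\ref{standardgood} the induced $f_\sharp$ is defined on all three complexes and, being defined on reduced representations, commutes with the embeddings $\iota_\sharp$ and the quotient maps $\pi_\sharp$. Thus $f_\sharp$ is a morphism of the two short exact sequences of chain complexes, and the naturality of $\partial$ is precisely the naturality clause of the zig-zag lemma. I expect the main obstacle to be not any single step but the accumulated bookkeeping of confirming that the standard-representation/good-chain formalism is preserved by $\partial$, $\iota_\sharp$, and $\pi_\sharp$; once that is done, all the topology has been absorbed into Lemmas~\ref{chaininA} and~\ref{standardgood}, and nothing beyond formal homological algebra remains.
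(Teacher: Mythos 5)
Your proposal is correct and follows essentially the same route as the paper: both exhibit the short exact sequence of chain complexes $0\to C_*(A)\xrightarrow{\iota_\sharp} C_*(X)\xrightarrow{\pi_\sharp} C_*(X,A)\to 0$ (exactness being immediate from the definition $C_n(X,A)=C_n(X)/C_n(A)$) and then invoke the zig-zag lemma to obtain the natural connecting homomorphisms and the long exact sequence. The paper's proof is in fact terser than yours, leaving implicit the bookkeeping you carry out about $\partial$, $\iota_\sharp$ and $\pi_\sharp$ preserving the good-chain formalism.
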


\begin{proof}
  As clearly $\im\iota_{\sharp}=\ke\pi_{\sharp}$ we have a short exact sequence of chain complexes
  \begin{equation*}
    \begin{xy}
      \xymatrix{
        & 0 \ar[d] & 0 \ar[d] & 0 \ar[d] & \\
        \cdots \ar[r]^{\partial\quad\;\;} & C_{n+1}(A) \ar[r]^{\;\partial} \ar[d]^{\iota_{\sharp}} & C_n(A) \ar[r]^{\partial\;} \ar[d]^{\iota_{\sharp}} & C_{n-1}(A) \ar[r]^{\quad\;\;\partial} \ar[d]^{\iota_{\sharp}} & \cdots \\
        \cdots \ar[r]^{\partial\quad\;\;} & C_{n+1}(X) \ar[r]^{\;\partial} \ar[d]^{\pi_{\sharp}} & C_n(X) \ar[r]^{\partial\;} \ar[d]^{\pi_{\sharp}} & C_{n-1}(X) \ar[r]^{\quad\;\;\partial} \ar[d]^{\pi_{\sharp}} & \cdots \\
        \cdots \ar[r]^{\partial\quad\;\;} & C_{n+1}(X,A) \ar[r]^{\;\partial} \ar[d] & C_n(X,A) \ar[r]^{\partial\;} \ar[d] & C_{n-1}(X,A) \ar[r]^{\quad\;\;\partial} \ar[d] & \cdots \\
        & 0 & 0 & 0 &
      }
    \end{xy}
  \end{equation*}
  It is a general algebraic fact (see eg.~\cite{Hatcher}) that for every short exact sequence of chain complexes there exists a natural boundary homomorphism $\partial$ of the corresponding homology groups giving the desired long exact sequence.
\end{proof}

\begin{theorem}[Excision]\label{excision}
  Let $(X,A)$ be an admissible pair and let $B$ be a closed subset of $X$ such that the interiors $\interior\hat A$ of $\hat A$ and $\interior\hat B$ of $\hat B$ cover $\hat X$. Then the inclusion $(B,A\cap B)\hookrightarrow(X,A)$ induces isomorphisms $H_n(B,A\cap B)\to H_n(X,A)$.
\end{theorem}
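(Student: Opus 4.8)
The plan is to follow the classical proof of excision by iterated barycentric subdivision, as in \cite{Hatcher}, adapting each step to our notion of chains. Recall that the classical subdivision operator $S$ and the associated chain homotopy $T$ satisfy $\partial T+T\partial=\id-S$ on singular chains; iterating, one gets $\partial T_m+T_m\partial=\id-S^m$ for the telescoped homotopy $T_m=\sum_{k<m}TS^k$. Excision works because (a) $S$ and $T$ are chain maps homotopic to the identity, and (b) for the cover $\{\interior\hat A,\interior\hat B\}$ of the compact space $\hat X$ one can subdivide any simplex finitely often so that each piece lands in $\hat A$ or in $\hat B$. I would first establish (a) and (b) in our setting and then run the standard algebraic argument.

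The first task is to check that $S$ and $T$ send $n$-chains in our sense to $n$-chains. Three observations do most of the work. First, subdivision never enlarges images: every piece of $S\sigma$ (and of $T\sigma$) has image contained in $\im\sigma$. Hence if $(\sigma_i)$ is locally finite in $X$ then so is the family of all subdivision pieces, since each compact $K\sub X$ still meets only the finitely many $\im\sigma_i$ it met before and each $\sigma_i$ contributes only finitely many pieces; so condition~(i) of admissibility is preserved even for infinite families. Second, subdivision and the prism operator both preserve degeneracy: if $\sigma=\beta\circ\alpha$ factors through a compact space $X_\sigma$ of dimension $\le n-1$, then every piece of $S\sigma$ or $T\sigma$ factors through the same $X_\sigma$ and is again degenerate; together with $\partial T+T\partial=\id-S$ this shows that $S$ and $T$ carry a standard representation $c=c_1+\partial c_2$ to another one, so they restrict to our chain groups. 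Third, for smallness I would apply the Lebesgue number lemma to the open cover $\{\sigma^{-1}\interior\hat A,\sigma^{-1}\interior\hat B\}$ of the compact metric space $\Delta^n$: some finite iterate $S^{m(\sigma)}$ breaks $\sigma$ into pieces each mapping into $\hat A$ or $\hat B$. Since a good chain has only finitely many non-degenerate simplices, these need only a bounded number of subdivisions, and the remaining degenerate simplices may be subdivided individually; because images never grow, the resulting variable-depth subdivision is still admissible and good.

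With small chains available, the conclusion is formal. Let $C^{\U}_*(X)$ be the subcomplex of those chains all of whose simplices (in the reduced representation) lie in $\hat A$ or in $\hat B$. The steps above show that the inclusion $C^{\U}_*(X)\hookrightarrow C_*(X)$ induces an isomorphism on homology and, after passing to the quotient by $C_*(A)$, on the relative homology of $(X,A)$. It then remains to identify $C^{\U}_*(X)/C_*(A)$ with $C_*(B)/C_*(A\cap B)$: the map sending a simplex in $\hat B$ to itself and a simplex in $\hat A$ to $0$ is an isomorphism of complexes, where I use Lemma~\ref{chaininA} to guarantee that a chain whose simplices all lie in $\hat A$ (resp.\ $\hat B$, resp.\ $\hat A\cap\hat B$) is genuinely a chain in $A$ (resp.\ $B$, resp.\ $A\cap B$). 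Here one also verifies that the hypotheses $(X,A)$ admissible and $\interior\hat A\cup\interior\hat B=\hat X$ force $(B,A\cap B)$ to be an admissible pair. Taking homology then yields $H_n(B,A\cap B)\cong H_n(X,A)$.

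The hard part will be condition~(ii): subdivision introduces new vertices at barycentres, and $\sigma$ may send a barycentre to an end, so the naive $S\sigma$ need not be admissible. Note first that the ends form a closed set, since a dense locally compact Hausdorff subspace is open in its compactification, so $\sigma^{-1}(X)$ is open in $\Delta^n$. My proposed remedy is to move the offending vertices into $X$ before subdividing. For the finitely many non-degenerate simplices of a good representative there are only finitely many relevant new vertices (the subdivision depth being bounded), and since $X$ is dense in $\hat X$ one can homotope these simplices, rel their $0$-faces, which already lie in $X$, so that all these vertices map into $X$; smallness is an open condition and survives a sufficiently small homotopy, while the homotopy changes the chain only by a boundary (the prism operator of Theorem~\ref{generalHomEq}) and hence not its class. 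For the infinitely many degenerate simplices I would instead exploit their factorisation through a lower-dimensional compact space $X_\sigma$, routing the new vertices through $X$ there and arguing by induction on $n$. Organising this simultaneous rerouting of vertices so as to remain compatible both with local finiteness and with the good-plus-boundary structure of a standard representation is, I expect, the main technical obstacle of the proof.
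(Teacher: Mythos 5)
You have the right skeleton (subdivision operators $S,T$, the telescoped homotopy, smallness via the Lebesgue number lemma, and Lemma~\ref{chaininA} for the final identification of $C_n(B)/C_n(A\cap B)$ with $C_n(A+B)/C_n(A)$ -- all of which the paper also uses), and you correctly isolate the real difficulty: barycentres may be mapped to ends, so the naive subdivision violates condition~(ii) of admissibility. But your proposed remedy does not close this gap, and you say so yourself. Two concrete problems. First, the ``small homotopy'' for the non-degenerate simplices: to move the image of a barycentre $b$ from an end $\sigma(b)$ to a nearby point of $X$ while keeping each subdivision piece inside $\interior\hat A$ or $\interior\hat B$, you need a path from $\sigma(b)$ into $X$ that does not leave whichever of these open sets contains the piece. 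This is \emph{not} automatic from density of $X$: the boundary of $\hat B$ may itself contain ends accumulating at $\sigma(b)$, and then no such path need stay in $\interior\hat B$. The paper has to manufacture this property by first replacing $B$ with a set $B'\supset\hat B\sm\hat A$ whose closure has compact boundary contained in $X$ (using admissibility of $(X,A)$ and finitely many compact neighbourhoods of the boundary of each $\hat A_k$); only then can one choose, along a segment $bv_j$ in $\Delta^n$, a point $b'$ with $\sigma(b')\in X$ satisfying the two displayed conditions \eqref{interiorA} and \eqref{interiorB}. You would also need your perturbations to agree on shared faces of different simplices, or the prism correction terms $P(\partial\sigma)$ will not cancel and $S$ will cease to be a chain map; the paper arranges this explicitly.

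Second, for the infinitely many degenerate simplices your plan (``route the new vertices through $X$ inside $X_\sigma$ and induct on $n$'') does not engage the actual obstruction, which lives in $\hat X$: the composite $\beta\circ\alpha$ still sends the barycentre wherever it sends it, regardless of how you move points inside $X_\sigma$, and you must reroute infinitely many simplices simultaneously while preserving local finiteness and the good-plus-boundary structure of a standard representation. The paper's solution is not a homotopy at all but a redefinition of the subdivision pieces as $\sigma$-pseudo-linear simplices: each piece has a linear ``centre'' on the shrunk simplex $[v_0',\dotsc,v_m']$ together with ``antennae'' $w_iw_i'$ leading from any end-valued vertex to a nearby $X$-valued point, the rest of $\Delta^m$ being collapsed. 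This keeps images controlled (so admissibility is preserved for infinite families), is uniform enough to be consistent on common faces (Lemma~\ref{antennaesub}), preserves degeneracy, and makes $D(c)$ manifestly a chain because every simplex of $D(c)$ factors through $\Delta^n$. So your write-up identifies the correct obstacle but, as you acknowledge, leaves the essential construction unproved; the missing ingredients are the passage from $B$ to $B'$ and the antenna construction (or some equivalent device) that replaces your hoped-for small homotopy.
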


To prove Theorem~\ref{excision}, we first sketch the proof of excision for ordinary singular homology, and then point out the differences to our case. We start with barycentric subdivision of simplices.%
   \COMMENT{}
The aim is to find a sufficiently fine barycentric subdivision so as to construct a homomorphism from $C_n(X)$ to $C_n(A+B):= C_n(A)+C_n(B)\sub C_n(X)$.

\begin{lemma}\label{barycentric}
  For every $n$-simplex $[v_0,\dotsc,v_n]$ there is a finite family of degenerate simplices%
   \COMMENT{}
  $\Delta^{n+1}\to[v_0,\dotsc,v_n]$ such that adding the boundaries of those ($n+1$)-simplices, as well as the $n$-simplices in the corresponding families of the ($n-1$)-faces of $[v_0,\dotsc,v_n]$, to the natural map $[v_0,\dotsc,v_n]$ yields the a sum of simplices in its barycentric subdivision (with suitable signs).
\end{lemma}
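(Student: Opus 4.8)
The plan is to import the barycentric subdivision machinery of ordinary singular homology and to observe that, for the \emph{natural} map, the chain homotopy between subdivision and identity consists entirely of degenerate simplices. Recall from~\cite{Hatcher} that one defines a subdivision operator $S$ and a chain homotopy operator $T$, first on linear (affine) chains in a convex set and then on singular chains by the rules $S\sigma=\sigma_\sharp(S\,\id)$ and $T\sigma=\sigma_\sharp(T\,\id)$, where $\id$ denotes the identity regarded as a linear simplex. These satisfy the fundamental identity $\partial T+T\partial=\id-S$. Writing $\sigma$ for the natural map $\Delta^n\to[v_0,\dotsc,v_n]$, this rearranges to
\[
  S\sigma=\sigma-\partial(T\sigma)-T(\partial\sigma),
\]
which is precisely the equation the lemma asserts, once the three groups of terms on the right are correctly identified.

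First I would produce the required finite family of degenerate $(n+1)$-simplices as the simplices occurring in $T\sigma$. Because $S$ and $T$ are built from affine simplices and are natural with respect to $\sigma_\sharp$, each $(n+1)$-simplex appearing in $T\sigma=\sigma_\sharp(T\,\id)$ has the form $\sigma\circ\ell$ for an affine map $\ell\colon\Delta^{n+1}\to\Delta^n$, and therefore factors through $[v_0,\dotsc,v_n]$. Since $[v_0,\dotsc,v_n]$ is a compact Hausdorff space of dimension $n$, every such $(n+1)$-simplex is degenerate in the sense of Section~\ref{sec:new}. This is the crux of the lemma: it is what places the subdivision construction inside our restricted class of chains. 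The family is finite because the subdivision of a single simplex is a finite complex, so $T\sigma$ is a finite sum.

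Next I would match the terms. The summand $\sigma$ is the natural map $[v_0,\dotsc,v_n]$; the summand $-\partial(T\sigma)$ is, up to signs, the sum of the boundaries of the degenerate $(n+1)$-simplices just produced; and $-T(\partial\sigma)$ expands along the faces as $-\sum_i(-1)^i\,T\big(\sigma\restr[v_0,\dotsc,\hat v_i,\dotsc,v_n]\big)$, each summand being exactly the collection of $n$-simplices that the same construction assigns to the $(n-1)$-face $[v_0,\dotsc,\hat v_i,\dotsc,v_n]$ (here $T$ raises dimension by one, so $T$ applied to an $(n-1)$-face yields $n$-simplices). Finally, the left-hand side $S\sigma$ is by definition a signed sum of the simplices of the barycentric subdivision of $[v_0,\dotsc,v_n]$. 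This gives the displayed equation with suitable signs, which is the statement of the lemma.

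The only genuine obstacle is verifying degeneracy in the precise sense of the paper, and as noted this is immediate from the fact that the target simplex has dimension $n$ while the pushed-forward prism simplices are $(n+1)$-dimensional. Everything else is a direct transcription of the standard identity $\partial T+T\partial=\id-S$ together with the bookkeeping of which simplices arise from $T\sigma$ and which from $T\partial\sigma$; the recursive phrasing of the lemma (``the corresponding families of the $(n-1)$-faces'') simply reflects that $T$ is applied face-by-face in the term $T\partial\sigma$.
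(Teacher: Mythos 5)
Your proposal is correct and takes essentially the same route as the paper: the paper's induction (coning off at the barycentre and applying the inductive hypothesis to the $(n-1)$-faces, with the $(n-2)$-face contributions cancelling) is exactly the recursive construction of Hatcher's chain homotopy $T$, which you instead invoke as a black box via the identity $\partial T+T\partial=\id-S$ and then match term by term. The one ingredient specific to this paper --- that the prism $(n+1)$-simplices are degenerate because they factor through the $n$-dimensional compact Hausdorff space $[v_0,\dotsc,v_n]$ --- you identify as the crux and verify correctly.
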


\begin{proof}
  Induction on $n$. The lemma is clearly true for $n=0$. For $n>0$, let $b$ be the barycentre of $[v_0,\dotsc,v_n]$. Then $[v_0,\dotsc,v_n]$ is homologous to $\sum_{k=0}^{n}(-1)^k\Delta_k$ with $\Delta_k\assign[b,v_0,\dotsc,\hat v_k,\dotsc,v_n]$, since it differs from this sum by the boundary of the degenerate ($n+1$)-simplex $[b,v_0,\dotsc,v_n]$. By induction, every ($n-1$)-face of $[v_0,\dotsc,v_n]$ is homologous via boundaries of degenerate simplices to a sum of the simplices in its barycentric subdivision plus a sum of (degenerate) simplices for each of its ($n-2$)-faces. Hence $\Delta_k$, being the cone over the ($n-1$)-face $[v_0,\dotsc,\hat v_k,\dotsc,v_n]$ is a corresponding sum of boundaries of degenerate simplices one dimension higher.%
   \COMMENT{}
  As each ($n-2$)-face appears equally often as a face of an ($n-1$)-face of $[v_0,\dotsc,v_n]$ with positive and negative sign, so does the sum of (degenerate) simplices belonging to this face. Hence those sums cancel in the sum of all boundaries, which implies that $[v_0,\dotsc,v_n]$ is homologous to a sum of the desired type.
\end{proof}

\noindent
For every singular $n$-simplex $\sigma$, let $T(\sigma)$ be the sum consisting of the compositions of $\sigma$ and each of the degenerate ($n+1$)-simplices provided by Lemma~\ref{barycentric} applied to $\Delta^n$ and let $S(\sigma)$ be the sum of restrictions of $\sigma$ to the simplices in the barycentric subdivision of $\Delta^n$. Then Lemma~\ref{barycentric} says that (with appropriate choice of the signs in $T$ and $S$)
\begin{equation*}
  \partial T(\sigma) = \sigma - T(\partial\sigma) - S(\sigma).
\end{equation*}

Now $S$ and $T$ extend to a chain map $S:C_n(X)\to C_n(X)$, that is, a map with $\partial S=S\partial$ (which follows immediately from the definition of the barycentric subdivision), and a map $T:C_n(X)\to C_{n+1}(X)$ with
\begin{equation}\label{chainST}
  \partial T + T\partial = \id - S.
\end{equation}

Next, let us define, for every positive integer $m$, the map $D_m:C_n(X)\to C_{n+1}(X)$ like in standard homology, i.e.\ $D_m:= \sum_{0\le j<m}TS^j$. Note that
\begin{equation}\label{chainDmSm}
  \partial D_m + D_m\partial = \id - S^m
\end{equation}
by~\eqref{chainST} and the fact that $S$ is a chain map.%
   \COMMENT{}

Finally, define maps $D:C_n(X)\to C_{n+1}(X)$ and $\rho:C_n(X)\to C_n(A+B)$ as follows: For every singular simplex $\sigma$, let $m(\sigma)$ be the smallest number $m$ for which every simplex in $S^m(\sigma)$ lives in the interior of $\hat A$ or of $\hat B$. Now define $D(\sigma):=D_{m(\sigma)}$ and extend linearly to $C_n(X)$. The map $\rho$ is defined by $\rho(\sigma):= S^{m(\sigma)}(\sigma) + D_{m(\sigma)}(\partial\sigma) - D(\partial\sigma)$ and extending linearly. Note that $\rho(\sigma)$ is indeed in $C_n(A+B)$, see~\cite{Hatcher}. With this notation, we have
\begin{equation}\label{chainDrho}
  \partial D + D\partial = \id - \iota\rho,
\end{equation}
where $\iota$ is the inclusion $C_n(A+B)\to C_n(X)$. Moreover, we clearly have
\begin{equation}\label{chainrhoiota}
  \rho\iota = \id.
\end{equation}

In the case of our homology, we have to confront three major problems in order to define $D$ and $\rho$ so as to satisfy~\eqref{chainDrho} and~\eqref{chainrhoiota}:\footnote{In order to avoid confusion with the notation of the case of standard homology, we will from now label the maps from standard homology by adding the index $_{\fin}$.} Firstly, these maps will map a singular simplex to a sum of simplices, but the underlying family of this sum need not be admissible as its simplices may map $0$-faces to ends. Hence we have to change the maps so that the simplices in their image map $0$-faces to $X$. The second problem is that, while we change the image simplices, we have to ensure that each of them still lives in the interior of $\hat A$ or of $\hat B$. Hence we are not allowed to change them too much. The third problem will be to guarantee that the image of a chain is a chain, i.e.\ that it has a standard representation. We shall overcome the first two problems by subdividing the simplices at points that are mapped to $X$ contrary to the barycentres of $\Delta^n$ and its faces.

To make this precise, we define the notion of a \emph{$\sigma$-pseudo-linear} $m$-simplex, where $\sigma$ is a given singular $n$-simplex. Let points $w_0,\dotsc,w_m,w'_0,\dotsc,w'_m\in\Delta^n,~m\ge 1$, be given such that $\sigma$ maps each $w'_i$ to $X$ and each $w_i$ with $w_i\not= w'_i$ to $\hat X\sm X$. The $\sigma$-pseudo-linear $m$-simplex with \emph{centre} $[w_0,\dotsc,w_m]$ and \emph{antennae} $w_iw'_i,~0\le i\le m,$ is a singular simplex $\tau:\Delta^m \to [w_0,\dotsc,w_m] \cup \bigcup_{i=0}^m w_iw'_i$ defined as follows. Let $v_0,\dotsc,v_m$ be the vertices of $\Delta^m$ and consider the following simplex $[v'_0,\dotsc,v'_m]\sub\Delta^m$: Put $v'_i\assign v_i$ if $w_i=w'_i$ and $v'_i\assign\frac{1}{m+2}\left(2v_i+\sum_{j\not= i}v_j\right)$ otherwise. Then map $[v'_0,\dotsc,v'_m]$ to $[w_0,\dotsc,w_m]$ by sending $v'_i$ to $w_i$ and extending linearly, and map each line $v'_iv_i$ to the line $w_iw'_i$. Call the union of $[v'_0,\dotsc,v'_m]$ and the $v'_iv_i$ the \emph{kernel} of $\Delta^m$ with respect to the points $w_i$ and $w'_i$.

For $m=1$, this already defines the simplex $\tau$. For $m>1$ and each $l$-face of $[w_0,\dotsc,w_m]$ ($1\le l<m$) define $\tau$ on the kernel of this face (with respect to the $w_i$ in this face and the associated $w'_i$) the same way it is defined on the kernel of $\Delta^m$. Now consider a point $x$ on the boundary of $[v'_0,\dotsc,v'_m]$. For every face of $[v'_0,\dotsc,v'_m]$ that contains $x$, we say that the projection of $x$ to the corresponding face of $\Delta^m$ is \emph{associated with $x$}. Note that this point lies in the kernel of this face of $\Delta^m$ and that $\tau$ maps it to the same point in $\Delta^n$ as $x$. Together with $x$ these points span internally disjoint simplices as follows: For every maximal descending sequence of faces that contain $x$,%
   \COMMENT{}
the points on those faces associated with $x$ span a simplex whose dimension only depends on the dimension of the smallest face that contains $x$.%
   \COMMENT{}
For a point on some line $v'_iv_i$ we obtain a set of ($n-1$)-simplices defined in the same way.%
   \COMMENT{}
It is easy to see that these simplices are disjoint for distinct points $x,x'$ and that they cover all of $\Delta^m$ apart from the interior of its kernel. We can thus define $\tau$ on each such simplex as the contstant function with image the image of $x$.

The definition of $\sigma$-pseudo-linear simplices immediately yields that the boundary of a $\sigma$-pseudo-linear ($m+1$)-simplex $\tau$ is the sum (with appropriate signs) of the $\sigma$-pseudo-linear $m$-simplices with centres the $m$-faces of the centre of $\tau$ (and the corresponding antennae). This implies

\begin{lemma}\label{antennaesub}
  If an $m$-simplex $[w_0,\dotsc,w_m]\sub\Delta^n$ is homologous to a sum of $m$-simplices, then this remains true if we choose a point $w'$ for every vertex $w$ of those simplices and replace each simplex $S$ by a $\sigma$-pseudo-linear simplex with centre $S$ and antennae all lines from a vertex $w$ of $S$ to its $w'$.\noproof
\end{lemma}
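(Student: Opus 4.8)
The plan is to realise the operation ``linear simplex $\mapsto$ $\sigma$-pseudo-linear simplex'' as a degeneracy-preserving chain map and then apply it verbatim to the given homology. First I would fix, once and for all, the assignment $w\mapsto w'$ on the finite set of all vertices occurring in $[w_0,\dotsc,w_m]$, in the $m$-simplices $S$ of the given sum, and in the degenerate $(m{+}1)$-simplices witnessing the homology; since $w'$ depends only on $w$, any face shared between two of these simplices receives identical antennae, which is what makes the construction coherent. For a linear $k$-simplex $[u_0,\dotsc,u_k]\sub\Delta^n$ write $P[u_0,\dotsc,u_k]$ for the $\sigma$-pseudo-linear $k$-simplex with centre $[u_0,\dotsc,u_k]$ and antennae $u_iu'_i$, and extend $P$ linearly to sums of such simplices. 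The boundary formula recorded immediately before the lemma says exactly that $P$ is a chain map: the boundary of $P[u_0,\dotsc,u_k]$ is, with the usual signs, the alternating sum over $i$ of the $\sigma$-pseudo-linear simplices with centres $[u_0,\dotsc,\hat u_i,\dotsc,u_k]$ and inherited antennae, and this is precisely $P(\partial[u_0,\dotsc,u_k])$. Hence $\partial P=P\partial$ on every chain assembled from these simplices.

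Next I would verify that $P$ preserves degeneracy; this is the only point that requires an argument. A witnessing $(m{+}1)$-simplex $E$ is $\sigma$ composed with the natural map of a linear simplex whose image is a polytope $Q\sub\Delta^n$ of dimension at most $m$ (this is what makes $E$ degenerate). The image of $PE$ is contained in $\sigma(Q\cup \Lambda)$, where $\Lambda$ is the union of the finitely many antenna segments. Now $Q\cup\Lambda$ is a compact subset of $\Delta^n$ obtained from a set of dimension $\le m$ by adjoining finitely many arcs, so by the countable sum theorem for covering dimension~\cite{Engelking} we get $\dim(Q\cup\Lambda)\le\max(m,1)=m$ (here $m\ge1$, as required for the pseudo-linear construction to make sense). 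Thus $PE$ factors through the compact Hausdorff space $Q\cup\Lambda$ of dimension at most $m=(m{+}1)-1$, so it is a degenerate $(m{+}1)$-simplex. The same computation shows that $P$ takes good chains to good chains.

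Finally I would write the hypothesis as $[w_0,\dotsc,w_m]-\sum_k\mu_kS_k=\partial D$, where $\mu_k\in\Z$ and $D$ is a sum of degenerate $(m{+}1)$-simplices. Applying the linear map $P$ and using $\partial P=P\partial$ together with the fact that $PD$ is again a sum of degenerate simplices, we obtain $P[w_0,\dotsc,w_m]-\sum_k\mu_kPS_k=\partial(PD)$. Hence the pseudo-linearised centre is homologous, via boundaries of degenerate simplices, to the corresponding sum of pseudo-linearised simplices, which is exactly the assertion of the lemma. The main obstacle in a careful write-up is the degeneracy-preservation step: one must check that attaching antennae does not raise the dimension of the centre, which is where the sum theorem for dimension enters; the chain-map identity, by contrast, is immediate from the boundary formula for $\sigma$-pseudo-linear simplices established just above.
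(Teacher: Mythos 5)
Your argument is correct and follows exactly the route the paper intends: the lemma is stated with no written proof, being justified there solely by the preceding observation that the boundary of a $\sigma$-pseudo-linear simplex is the sum of the $\sigma$-pseudo-linear simplices over the faces of its centre, and your proposal simply packages this as the chain-map identity $\partial P = P\partial$ and applies $P$ to the given homology. Your additional check that $P$ preserves degeneracy (via the sum theorem for covering dimension, valid since $m\ge 1$) is a welcome elaboration of a point the paper leaves implicit, not a departure from its approach.
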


The maps $D$ and $\rho$ will map a singular simplex $\sigma$ to a sum consisting of compositions of $\sigma$ and $\sigma$-pseudo-linear simplices, and correspondingly a chain $c$ to a sum of compositions with $\sigma$-pseudo-linear simplices for all simplices $\sigma$ in a representation of $c$ still to be chosen. In order to chose the antennae of the $\sigma$-pseudo-linear simplices, we shall use a subset $B'$ of $B$ defined as follows: For every point in the boundary of $\hat A$, choose a compact neighbourhood that is contained in $B$.%
   \COMMENT{}
Since the boundary of each $\hat A_k=\hat A \cap \hat X_k$ is compact, finitely many such neighbourhoods suffice to cover it. Let $B'$ be the union of $\hat B \sm \hat A$ and the neighbourhoods for all $k$. Write $\hat B'$ for the closure of $B'$ in $\hat X$. Note that the interiors of $\hat A$ and $\hat B'$ cover $\hat X$ and that the boundaries of each $\hat B'_k = \hat B' \cap \hat X_k$ is a compact subset of $X_k$.

Now consider a singular $n$-simplex $\sigma$. Let $b$ be the barycentre of $\Delta^n$. If $\sigma(b)\in X$, then we set $b':=b$. Otherwise consider the line $bv_0$, where $\Delta^n=[v_0,\dotsc,v_n]$. As $\hat X\sm X$ is closed and $\sigma$ is continuous, there is a last point $\tilde b$ on this line  for which $\sigma(b\tilde b)\sub \hat X\sm X$. Since the boundaries of $\hat A$ and $\hat B$ are contained in $X$, we can find a point $b'$ on the line $bv_0$ so that
\begin{txteq}\label{interiorA}
  if $\sigma(b)$ lies in the interior of $\hat A$ then so does $\sigma(bb')$
\end{txteq}
and
\begin{txteq}\label{interiorB}
  if $\sigma(b)$ lies in the interior of $\hat B'$ then so does $\sigma(bb')$.
\end{txteq}
Proceed analogously if $b$ is a barycentre of a face of $\Delta^n$. The only difference is that we consider the line $bv_j$, where $j$ is the smallest index with $v_j$ belonging to that face. It is not hard to see that the points $b'$ can be chosen so that, for singular simplices with a common face, the choices of the points on this face coincide.%
   \COMMENT{}

We are now ready to define the maps $D$ and $\rho$. For every singular simplex $\sigma$, let $m(\sigma)$ be the smallest number $m$ for which every simplex in $S^m(\sigma)$ lives in the interior of $\hat A$ or of $\hat B'$. Now for a chain $c\in C_n(X)$ with reduced representation $c = \sum_{i\in I}\lambda_i\sigma_i$, consider the sum
\begin{equation*}
  \sum_{i\in I}\lambda_i (D_{m(\sigma_i)})_{\fin}(\sigma_i)
\end{equation*}
and define $D(c)$ to be the sum obtained from the above sum by replacing each simplex in each $(D_{m(\sigma_i)})_{\fin}(\sigma_i)$ by the composition of $\sigma_i$ and a $\sigma_i$-pseudo-linear simplex defined as above. (Note that each simplex in $(D_{m(\sigma_i)})_{\fin}(\sigma_i)$ is the concatenation of $\sigma_i$ and a standard map of a simplex in $\Delta^n$.) For $\rho$, consider the sum
\begin{equation*}
  \sum_{i\in I}\left(S^{m(\sigma_i)}_{\fin}(\sigma_i) + (D_{m(\sigma_i)})_{\fin}(\partial\sigma_i) - D_{\fin}(\partial\sigma_i)\right)
\end{equation*}
and again replace each simplex in it by the composition of $\sigma_i$ and a $\sigma_i$-pseudo-linear simplex so as to obtain $\rho(c)$.

We need to show that $D(c)$ and $\rho(c)$ are indeed chains, i.e.\ that they have a standard representation. For both sums the underlying families of simplices are admissible as the family $(\sigma_i)_{i\in I}$ is and both $D(c)$ and $\rho(c)$ consist of finitely many restrictions of each $\sigma_i$ (with their $0$-faces mapped to $X$). Now $D(c)$ clearly has a standard representation since each of its simplices can be written as $\sigma_i\circ\tau$ with $\tau:\Delta^{n+1}\to \Delta^n$ and thus is degenerate. A standard representation of $\rho(c)$ can be found by combining standard representations of $\partial D(c)$, $D(\partial c)$, and $c$, according to~\eqref{chainDrho}. Hence $D(c)$ and $\rho(c)$ are chains.

\begin{proof}[Proof of Theorem~\ref{excision}.]
  Since the inclusion $\iota:C_n(A+B)\hookrightarrow C_n(X)$ maps chains in $A$ to chains in $A$ it induces a homomorphism $C_n(A+B,A)\to C_n(X,A)$. By~\eqref{chainDrho} and~\eqref{chainrhoiota} we obtain that for an $n$-cycle $z$ in $C_n(A+B,A)$ or in $C_n(X,A)$ the sum $(\rho\circ\iota)(z)-z$, respectively $(\iota\circ\rho)(z)-z$, is a boundary. Hence we have $\rho_*\circ\iota_*=\id$ and $\iota_*\circ\rho_*=\id$ and thus $\iota_*:H_n(A+B,A)\to H_n(X,A)$ is an isomorphism.
  
  We claim that the map $C_n(B)/C_n(A\cap B)\to C_n(A+B)/C_n(A)$ induced by inclusion is an isomorphism and thus induces an isomorphism $H_n(B,A\cap B)\to H_n(A+B,A)$. Then we will have $H_n(B,A\cap B)\simeq H_n(X,A)$ as desired. Indeed, $C_n(A+B)/C_n(A)$ can be obtained by starting with $C_n(B)$ and factoring out those chains whose reduced representation consists of simplices living in $\hat A$ (and hence in $\hat A\cap \hat B$). By Lemma~\ref{chaininA} and the fact that the boundary of each $\hat A_k$ is a compact subset of $X_k$, the latter are precisely the chains in $C_n(A\cap B)$, hence the map $C_n(B)/C_n(A\cap B)\to C_n(A+B)/C_n(A)$ is an isomorphism.
\end{proof}

The last axiom follows directly from the definition.

\begin{theorem}[Disjoint unions]
  For a disjoint union $X=\bigsqcup_{\alpha}X_\alpha$ (with $\hat X$ the disjoint union of all $\hat X_{\alpha}$) with inclusions $\iota_{\alpha}:X_{\alpha}\hookrightarrow X$, the direct sum map $\bigoplus_{\alpha}\left(\iota_{\alpha}\right)_*:\bigoplus_{\alpha}H_n(X_{\alpha},A_{\alpha}) \to H_n(X,A)$, where $A=\bigsqcup_{\alpha}A_{\alpha}$, is an isomorphism.%
   \COMMENT{}
\noproof
\end{theorem}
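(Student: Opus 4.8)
The plan is to prove the statement entirely at the level of chain complexes and then to invoke the purely algebraic fact that homology commutes with direct sums. By the definition recorded above, the chain group $C_n(X,A)$ is \emph{literally} the direct sum $\bigoplus_\alpha C_n(X_\alpha,A_\alpha)$, so the only genuine content is to check that the whole chain complex $\big(C_\bullet(X,A),\partial\big)$ splits as the direct sum of the complexes $\big(C_\bullet(X_\alpha,A_\alpha),\partial\big)$, i.e.\ that the boundary operator respects this decomposition.

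First I would observe that $\partial$ acts componentwise. Since $\hat X=\bigsqcup_\alpha\hat X_\alpha$ carries the disjoint union topology and $\Delta^n$ is connected, the image of any singular simplex $\sigma$ in $\hat X$ lies in a single $\hat X_\alpha$; consequently every face of $\sigma$ lies in that same $\hat X_\alpha$, so $\partial\sigma$ again lies in $C_{n-1}(X_\alpha,A_\alpha)$. Extending linearly, $\partial$ maps the summand $C_n(X_\alpha,A_\alpha)$ into $C_{n-1}(X_\alpha,A_\alpha)$, so $\partial=\bigoplus_\alpha\partial_\alpha$, where $\partial_\alpha$ is the boundary operator of $X_\alpha$. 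Here I would also recall that an admissible family in $X$ meets only finitely many $X_\alpha$, so that $C_n(X)$ really is the (finitely supported) direct sum and the notion of $n$-chain agrees summand by summand; this is exactly the content of the earlier remark that $C_n(X)=\bigoplus_\alpha C_n(X_\alpha)$ is the group of $n$-chains in $X$.

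Once the complex splits, the rest is formal: taking kernels, images and quotients all commute with arbitrary direct sums, so
\[
  Z_n(X,A)=\ke\partial=\bigoplus_\alpha\ke\partial_\alpha=\bigoplus_\alpha Z_n(X_\alpha,A_\alpha),
\]
and likewise $B_n(X,A)=\bigoplus_\alpha B_n(X_\alpha,A_\alpha)$; hence $H_n(X,A)=\bigoplus_\alpha H_n(X_\alpha,A_\alpha)$. It then remains to identify this canonical isomorphism with the map in the statement. Each inclusion $\iota_\alpha$ is a standard map, and the induced chain map $(\iota_\alpha)_\sharp$ is precisely the canonical insertion of the summand $C_n(X_\alpha,A_\alpha)$ into $C_n(X,A)$; therefore $\bigoplus_\alpha(\iota_\alpha)_*$ is exactly the isomorphism just produced.

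I expect no real obstacle here: the axiom holds essentially by construction, the homology having been \emph{defined} on disjoint unions as a direct sum. The only (minor) point requiring care is bookkeeping about index sets: the construction in the text was phrased for $X=\bigsqcup_k X_k$ with each $\hat X_k$ a single compactification, whereas here each $X_\alpha$ may itself be a disjoint union of compactifications, so one must regroup the indices. Since a direct sum indexed by a partitioned set is canonically isomorphic to the iterated direct sum over its blocks, and admissibility (finitely many nonempty components) is stable under this regrouping, the two descriptions of $C_\bullet(X,A)$ coincide and the argument goes through unchanged.
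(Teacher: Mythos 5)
Your proposal is correct and matches the paper's treatment: the paper states that this axiom ``follows directly from the definition'' and gives no proof, since $C_n(X,A)$ was defined as $\bigoplus_k C_n(X_k,A_k)$, the boundary operator acts componentwise (each singular simplex has connected image), and homology commutes with direct sums. Your additional remarks on regrouping indices and identifying $(\iota_\alpha)_\sharp$ with the canonical insertion simply make explicit what the paper leaves implicit.
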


\section{Cohomology}

The cohomology belonging to the homology constructed in Section~\ref{sec:new} is defined as usual by dualization.%
   \COMMENT{}
In order to be a cohomology theory, the cochain complex has to satisfy axioms dual to those of a homology theory:
\begin{description}
\item[Homotopy equivalence:]
  If continuous maps $f,g:(X,A)\to(Y,B)$ are homotopic, then $f^*=g^*:H^n(Y,B;G)\to H^n(X,A;G)$.
\item[The Long Exact Sequence of a Pair:]
  For each pair $(X,A)$ there are coboundary homomorphisms $\delta:H^n(A)\to H^{n+1}(X,A)$ such that
  \begin{equation*}
    \begin{xy}
      \xymatrix{
	\dotsb \ar[r]^{\delta\quad\quad} & H^n(X,A;G) \ar[r]^{\;\;\pi^*} & H^n(X;G) \ar[r]^{\iota^*} & H^n(A;G) \ar[dl]_{\delta} & \\
	& & H^{n+1}(X,A;G) \ar[r]^{\;\;\pi^*} & H^{n+1}(X;G) \ar[r]^{\quad\quad\iota^*} & \dotsb
      }
    \end{xy}
  \end{equation*}
  is an exact sequence, where $\iota$ denotes the inclusion $(A,\es)\to (X,\es)$ and $\pi$ denotes the inclusion $(X,\es)\to (X,A)$. These coboundary homomorphisms are \emph{natural}, i.e.\ given a standard map $f:(X,A)\to(Y,B)$ the diagrams
  \begin{equation*}
    \begin{xy}
      \xymatrix{
        H^n(B) \ar[r]^{\delta\quad} \ar[d]^{f^*} & H^{n+1}(Y,B) \ar[d]^{f^*}\\
	H^n(A) \ar[r]^{\delta\quad} & H^{n+1}(X,A)
      }
    \end{xy}
  \end{equation*}
  commute.
\item[Excision:]
  If $(X,A)$ is an admissible pair and $B$ is a subspace of $X$ such that the interiors of $\hat A$ and $\hat B$ cover $\hat X$, the inclusion $(B,A\cap B)\hookrightarrow(X,A)$ induces isomorphisms $H^n(X,A;G)\to H^n(B,A\cap B;G)$ for all $n$.
\item[Disjoint unions:]
  For a disjoint union $X=\bigsqcup_{\alpha}X_\alpha$ with inclusions $\iota_{\alpha}:X_{\alpha}\hookrightarrow X$, the product map $\prod_{\alpha}\left(\iota_{\alpha}\right)^*:\prod_{\alpha}H^n(X_{\alpha},A_{\alpha};G) \to H^n(X,A)$, where $A=\bigsqcup_{\alpha}A_{\alpha}$, is an isomorphism.
\end{description}

The proofs of the first and the last axiom are obtained by direct dualization of the proof of the corresponding axiom for homology.%
   \COMMENT{}
The existence of a long exact sequence is also dual to the homology case: The short exact sequence
\begin{equation*}
  \begin{xy}
	\xymatrix{
	  0 \ar[r] & C_n(A) \ar[r]^{\iota_\sharp} & C_n(X) \ar[r]^{\pi_\sharp\;\;} & C_n(X,A) \ar[r] & 0
	}
  \end{xy}
\end{equation*}
dualizes to
\begin{equation*}
  \begin{xy}
	\xymatrix{
	  0 & \ar[l] C^n(A;G) & \ar[l]_{\;\;\iota^\sharp} C^n(X;G) & \ar[l]_{\pi^\sharp} C^n(X,A;G) & \ar[l] 0
	}
  \end{xy},
\end{equation*}
where $\iota^\sharp$ and $\pi^\sharp$ denote the cochain maps induced by the inclusions $\iota:(A,\es)\to(X,\es)$ and $\pi:(X,\es)\to(X,A)$. (Note that the cochain maps are the duals of the corresponding chain maps $\iota_\sharp$ and $\pi_\sharp$.) This short sequence is exact: Injectivity of $\pi^\sharp$ is immediate%
   \COMMENT{}
and so is $\ker \iota^\sharp = \im \pi^\sharp$.%
   \COMMENT{}
The surjectivity of $\iota^\sharp$ follows easily from Lemma~\ref{chaininA}.%
   \COMMENT{}
We thus have a short exact sequence of cochain complexes. Like in the homology case, this gives us the desired long exact sequence.

The last axiom, excision, follows with a proof mostly dual to that in the homology case: The chain homotopy $D$ and the chain maps $\rho$ and $\iota$ that satisfy~\eqref{chainDrho} and~\eqref{chainrhoiota} induce dual maps $D^*$, $\rho^*$, and $\iota^*$ that satisfy the dual equations $\iota^*\rho^*=\id$ and $\id - \rho^*\iota^* = D^*\delta + \delta D^*$. Therefore, $\iota^*$ and $\rho^*$ induce isomorphisms between the cohomology groups $H^n(X;G)$ and $H^n(A+B;G)$. The inclusion $\iota:C_n(A+B)\hookrightarrow C_n(X)$ is the identity on $C_n(A)$ and hence induces an inclusion $C_n(A+B,A)\hookrightarrow C_n(X,A)$ which we also denote by $\iota$. Now by the long exact sequence axiom we have a commutative diagram
\begin{equation*}
  \begin{xy}
	\xymatrix{
	  H^{n-1}(X;G) \ar[d]\ar[r]^{\iota^*} & H^{n-1}(A+B;G) \ar[d]\\
	  H^{n-1}(A;G) \ar[d]\ar[r]^{\iota^*} & H^{n-1}(A;G) \ar[d]\\
	  H^n(X,A;G) \ar[d]\ar[r]^{\iota^*} & H^n(A+B,A;G) \ar[d]\\
	  H^n(X;G) \ar[d]\ar[r]^{\iota^*} & H^n(A+B;G) \ar[d]\\
	  H^n(A;G) \ar[r]^{\iota^*} & H^n(A;G)\\
	}
  \end{xy}
\end{equation*}
and since the two upmost maps $\iota^*$ as well as the two downmost $\iota^*$ are isomorphisms, the Five Lemma~\cite{Hatcher} shows that $\iota$ induces an isomorphism $H^n(X,A;G)\to H^n(A+B,A;G)$. Since the map $C_n(B)/C_n(A\cap B) \hookrightarrow C_n(A+B)/C_n(A)$ induced by inclusion is an isomorphism, this also induces an isomorphism $C^n(A+B,A;G)\to C^n(B,A\cap B;G)$ and hence an isomorphism on cohomology. We thus have an isomorphism $H^n(X,A;G)\to H^n(B,A\cap B;G)$ as desired.

\section{The new homology for graphs}\label{sec:graphs}

In this section we wind up the analysis of our new homology theory in
the case of graphs by computing its homology groups for the case that
the space $X$ is a locally finite graph and $\hat X$ its Freudenthal
compactification. This will in particular imply
Theorem~\ref{newH1isC}.

The group homomorphism $f\colon H_1(X)\to\C(X)$ needed for
Theorem~\ref{newH1isC} counts how often the simplices in a
representative of a homology class $h$ traverse every given edge $\ve$
and then lets $f(h)$ map $\ve$ to this number. Formally, $f$ is
defined as follows. By $\eta_k$, we denote the loop that goes $k$
times around $S^1$, i.e.\ $\eta_k(t)= \ee^{2\pi\ii kt}$. For every
edge $e$ of $X$, let $f_e\colon \hat X\to S^1$ wrap $e$ round $S^1$ in
its natural direction, defining $f_e\restr e$ as
$\eta_1\circ\theta_e^{-1}$ (recall that $\theta_e\colon[0,1]\to e$ is
the homeomorphism given by the definition of $X$ as a $1$-complex) and
putting $f_e(\hat X\setminus e)\assign 1$. Now let $[c]\in H_1(X)$.
Only finitely many simplices in the reduced representation of $c$ meet
$e$, let $c'$ be the chain represented by the sum of these simplices.
The chain $\fes(c')$ is a $1$-cycle in $S^1$, hence its homology class
is represented by some $\eta_k =: \eta_{k(c,e)}$. We now define
$f([c])$ by putting $f([c])(\ve)\assign k(c,e)$.

The first thing to check is whether $f$ is well defined. To this
end, let $c_1,c_2$ be representatives of the same homology
class. Then $c_1-c_2$ is the sum of boundaries of an admissible family
of $2$-simplices. Since only finitely many of these $2$-simplices, say
$\sigma_1,\dotsc,\sigma_n$, can meet $e$, the $1$-cycles $\fes(c'_1)$
and $\fes(c'_2)$ in $S^1$ differ by a sum of finitely many
boundaries%
  \COMMENT{}
and are thus homologous, implying $k(c_1,e)=k(c_2,e)$. Therefore,
$f(h)$ is well defined.

The map $f$ defined here is a homomorphism $H_1(X)\to\vEE(X)$, but it
is in fact even a homomorphism $H_1(X)\to\C(X)$, which follows
immediately from Proposition~\ref{representH1} and the
fact that the corresponding map for standard singular homology has its
image in $\C(X)$ \cite[Lemma~11]{Hom1}.

We are now ready to state the main theorem of this section.

\begin{theorem}\label{thm:Hngraph}
  Let $X$ be a locally finite connected graph and $\hat X$ its
  Freudenthal compactification. Then
  \begin{enumerate}
  \item\label{enum:H0graph}
    mapping a $0$-chain in $X$ with finite reduced representation
    $\sum\lambda_i\sigma_i$ to $\sum\lambda_i$ defines a homomorphism
    $C_0(X)\to\ZZ$ which in turn induces an isomorphism
    $H_0(X)\to\ZZ$,
  \item\label{enum:H1graph}
    the homomorphism $f\colon H_1(X)\to\C(X)$ defined above is an
    isomorphism, and
  \item\label{enum:Hngraph}
    $H_n(X)=0$ for every $n>1$.
  \end{enumerate}
\end{theorem}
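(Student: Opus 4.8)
The plan is to compute the three homology groups separately, exploiting the fact (Proposition~\ref{representH1}) that classes in $H_0$ and $H_1$ have finite representatives, together with the already-proven axioms—especially the long exact sequence and excision—to reduce infinite questions to finite ones wherever possible. For $H_0$ and $H_1$ the strategy is to leverage the known structure of standard singular homology of a finite graph (a wedge of circles) and the isomorphism $H_1(X)\to\C(X)$ promised by the map $f$; for $H_n$ with $n>1$ the strategy is a dimension argument showing there are simply not enough nondegenerate simplices to support a nontrivial cycle.

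\textbf{Part (i): computing $H_0(X)\cong\ZZ$.}

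First I would verify that the augmentation $\sum\lambda_i\sigma_i\mapsto\sum\lambda_i$ is well defined on $C_0(X)$: by Proposition~\ref{representH1} every $0$-cycle has a finite representative, and since no $0$-simplex is degenerate, the augmentation is just the usual one applied to that finite representative. The key point is that adding a boundary $\partial\tau$ of a $1$-simplex does not change the sum, exactly as in the finite case, so the map descends to $H_0(X)$. Surjectivity is clear (a single vertex maps to $1$). For injectivity I would show that any finite $0$-cycle with total coefficient $0$ is a boundary: since $X$ is connected, any two vertices are joined by a finite path in $X$ (not needing ends at all), so $\sigma_v-\sigma_w=\partial(\text{path})$, and the standard telescoping argument writes any augmentation-zero $0$-chain as a finite boundary. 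Thus $H_0(X)\cong\ZZ$.

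\textbf{Part (ii): $f\colon H_1(X)\to\C(X)$ is an isomorphism.}

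The homomorphism $f$ and its well-definedness are already established in the text, as is the fact that its image lies in $\C(X)$. So two things remain: injectivity and surjectivity. For \emph{surjectivity}, I would take an arbitrary element of $\C(X)$, realize it as a thin sum of oriented circles in $|X|$, and build a $1$-cycle in our sense mapping to it; crucially, each circle in $|X|$ is the homeomorphic image of $S^1$ and can be parametrized so that vertices are the images of finitely many subdivision points (rooting the $0$-faces in $X$) while ends occur only as interior points of arcs—precisely what admissibility demands. The thin-sum condition guarantees local finiteness in $X$, so the resulting family is admissible, and Lemma~\ref{orthogonal} (the cut criterion) should match our cycle condition edge-by-edge. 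For \emph{injectivity}, I would use Proposition~\ref{representH1}: a class in $\ker f$ has a finite representative $z$, and $f([z])=0$ means $z$ traverses every edge a net zero number of times. The hard part will be converting this ``net-zero on every edge'' condition into the statement that $z$ is a genuine boundary $\partial c_2$ with $c_2$ good; I expect to argue that such a finite $z$, being a finite $1$-cycle whose edge-counts vanish, can be filled in by finitely many $2$-simplices using contractibility of the relevant pieces of $|X|$, reducing to the standard finite-complex computation.

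\textbf{Part (iii): $H_n(X)=0$ for $n>1$.}

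Here the strategy is dimensional. By Proposition~\ref{representHn}, every class is represented by a \emph{good} $n$-cycle $z$, i.e.\ one whose reduced representation $\sum_{i\in I}\lambda_i\sigma_i$ has only finitely many nondegenerate $\sigma_i$. Since $X$ is a $1$-dimensional complex and $\hat X=|X|$ its Freudenthal compactification, the space $\hat X$ has covering dimension $1$ (the added end set is totally disconnected). For $n>1$, any singular $n$-simplex into a compact Hausdorff subspace of dimension $\le n-1$ is degenerate; the main obstacle is showing that \emph{every} $n$-simplex with $n>1$ into $\hat X$ is degenerate, which should follow because the image of $\Delta^n$ is a compact subset of the one-dimensional space $\hat X$ and hence itself has dimension at most $1\le n-1$. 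Once every $n$-simplex is degenerate for $n\ge2$, a good $n$-cycle has a finite nondegenerate part that is empty, so $z$ is itself a representation by degenerate simplices; combined with the $n$-chain definition $c=c_1+\partial c_2$ this forces $z$ to be a boundary (indeed $z$ itself is good with all simplices degenerate, hence homologous to $0$ by the argument preceding Proposition~\ref{representHn}). I would make this precise by checking that a good cycle all of whose simplices are degenerate lies in $B_n(X)$, yielding $H_n(X)=0$. The delicate point to get right is the covering-dimension claim for $|X|$ and the implication that compactness of the simplex image places it in a space of dimension $\le n-1$ as required by the definition of degeneracy.
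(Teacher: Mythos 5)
Your part~(i) is correct and essentially the paper's argument. The serious problem is part~(iii). You correctly observe that for $n\ge 2$ every singular $n$-simplex in the one-dimensional space $\hat X$ is degenerate, but you then conclude that a good cycle all of whose simplices are degenerate ``is homologous to $0$ by the argument preceding Proposition~\ref{representHn}''. That argument only shows that $c_1+\partial c_2$ is homologous to $c_1$; it says nothing about $c_1$ itself being a boundary. Degeneracy means factoring through a compact space of dimension at most $n-1$, which is a bookkeeping condition controlling which infinite sums are admitted as chains --- it does not make a cycle null-homologous (already in ordinary singular homology every $2$-simplex in $S^1$ factors through the $1$-dimensional space $S^1$, yet $H_2(S^1)=0$ requires a real proof). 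The paper instead runs a genuine geometric argument: it enumerates the edges $e_0,e_1,\dotsc$, applies the excision machinery (the map $\rho$ from the proof of Theorem~\ref{excision}) repeatedly to split a given $n$-cycle into cycles supported on single edges and on vertex stars, bounds each of these in its contractible piece, and then checks that the infinitely many bounding $(n+1)$-chains form an admissible family. The fact that all $(n+1)$-simplices are degenerate enters only at the very end, to guarantee that this infinite family is a \emph{good} $(n+1)$-sum and hence a legal chain.

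Part~(ii) also has gaps at both ends. For injectivity, your plan to fill in a finite $1$-cycle with net-zero edge counts ``by finitely many $2$-simplices, reducing to the standard finite-complex computation'' cannot work: the corresponding map on ordinary singular homology $H_1(\hat X)\to\C(X)$ has a nontrivial kernel whenever $X$ is not essentially finite (this is the main point of~\cite{Hom1}), so no reduction to the standard computation is available. The crucial step in the paper is that a finite $1$-cycle may contain simplices traversing infinitely many edges, and one must subdivide these into their individual passes through edges; this requires adding the boundary of an \emph{infinite} admissible family of $2$-simplices (\cite[Lemma~20]{Hom1}), which is legal precisely because all $2$-simplices in the $1$-dimensional $\hat X$ are degenerate. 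For surjectivity, realizing an element of $\C(X)$ as an infinite admissible family of loops does not immediately produce a chain in the new sense: an infinite sum of nonconstant (hence nondegenerate) $1$-simplices is not good, and you would still have to exhibit it as $c_1+\partial c_2$ with both summands good. The paper avoids this by quoting the surjectivity of the standard singular-homology map, which supplies a finite (hence automatically good) representative for each element of $\C(X)$.
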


\begin{proof}
  \begin{enumerate}
  \item[\ref{enum:H0graph}]
    As no $0$-simplex is degenerate and every degenerate $1$-simplex
    is constant and hence a cycle, we obtain that every $0$-chain has
    a finite reduced representation, hence the map defined above is
    indeed a homomorphism $C_0(X)\to\ZZ$. Moreover, the boundaries of
    $1$-chains are precisely the boundaries of the finite $1$-chains
    and hence the group $H_0(X)$ is the same as in standard singular
    homology. In particular, the above map induces an isomorphism to
    $\ZZ$.
  \item[\ref{enum:H1graph}]
    The map $f$ is surjective since the corresponding map for singular
    homology is~\cite[Lemma 12]{Hom1}. To show that it is injective,
    let $c$ be a finite $1$-cycle with $f([c])=0$, that is, every edge
    of $X$ is traversed by the simplices in $c$ the same number of
    times in both directions. In a finite graph, we would subdivide
    the simplices into their passes through the edges of $X$, thus
    showing that $c$ is null-homologous. In an infinite graph, we
    would have to subdivide infinitely often, which is not possible in
    standard singular homology. But in our new homology, we can: There
    is an admissible sum of $2$-simplices whose boundary we can add to
    $c$ so as to obtain a sum $c'$ of passes through
    edges~\cite[Lemma~20]{Hom1}. Since $\hat X$ is $1$-dimensional,
    all $2$-simplices in $\hat X$ are degenerate, implying that the
    above sum of $2$-simplices is good and hence $c'$ is a
    $1$-cycle. As $c$ and $c'$ are homologous, we have $f([c']) =
    f([c]) = 0$. Thus for every edge $e$, the cycle $c'$ contains the
    same number of passes through $\ve$ as through $\ev$, showing that
    $c'$, and hence also $c$, is null-homologous.
  \item[\ref{enum:Hngraph}]
    Let $n>1$ and an $n$-cycle $z$ be given; we show that $z$ is a
    boundary. To this end, choose an enumeration $e_0,e_1,\dotsc$ of
    the edges of $X$. Let $B_1$ be the union of $X-e_0$ and two
    disjoint closed half-edges of $e_0$, one at each endvertex. Then
    the interiors of $e_0$ and $\hat B_1$ cover $\hat X$. We may thus
    apply excision.
    
    Let $\rho$ be the map $C_n(X)\to C_n(e_0+B_1)$ from
    \eqref{chainDrho} and~\eqref{chainrhoiota}. Then $\rho(z)$
    is the sum of a chain in $e_0$ and a chain in $B_1$. The boundary
    of both of those chains is an ($n-1$)-cycle in $e_0\cap B_1$. As
    $e_0\cap B_1$ is the disjoint union of two closed intervals, all
    its homology groups of dimension at least $1$ vanish, hence the
    boundary of the two chains is also a boundary in $e_0\cap B_1$.
    Choose an $n$-chain in $e_0\cap B_1$ with the right boundary and
    subtract it from our two chains in $e_0$ and $B_1$ so as to obtain
    cycles $z_0$ in $e_0$ and $z'_1$ in $B_1$. Note that $z_0+z'_1$ is
    homologous to $z\sasign z'_0$.
    
    Now repeat the construction with $z'_1$, $e_1$, and $B_2$ the
    union of $B_1-e_1$ and two disjoint half-edges of $e_1$ so as to
    obtain cycles $z_1$ in $e_1$ and $z'_2$ in $B_2$. Working through
    the edges $e_i$ in turn, we obtain cycles $z_i$ in $e_i$
    and $z'_{i+1}$ in $B_{i+1}$. Since $X$ is locally finite, for
    every vertex $v$ there exists an $i$ such that the component $C_v$
    of $B_i$ containing $v$ is a closed star around $v$. In all later
    $B_j$, this component remains unchanged, and hence the simplices
    of $z'_i$ living in $C_v$ are not touched by $\rho$, i.e.\ all
    later $z'_j$ agree on $C_v$; let $z_v$ be the cycle in $C_v$
    formed by those simplices.
    
    Since each $z'_i$ is homologous in $B_i$ to $z_i+z'_{i+1}$ (with
    $B_0\assign X$), the family of all simplices in the ($n+1$)-chains
    certifying these homologies is locally finite in $X$: For every
    $x\in X$ there is an $i$ such that either $x\notin B_i$ or $x$ is
    contained in a component $C_v$ of $B_i$ (if $x$ is a vertex, then
    obviously $v=x$). In either case there is a neighbourhood around
    $x$ that avoids all the ($n+1$)-chains of later steps. Since each
    ($n+1$)-simplex is degenerate, the family of those simlices is
    admissible.
    
    Thus, $z$ is homologous to the sum of all $z_i$ and $z_v$. Since
    each $e_i$ and each $C_v$ has trivial homology in dimension $n$,
    each $z_i$ is a boundary in $e_i$, of an ($n+1$)-chain $c_i$ say,
    and so is each $z_v$ in $C_v$, of an ($n+1$)-chain $c_v$ say. As
    every point in $X$ has a neighbourhood that meets only finitely
    many $e_i$ and $C_v$, the infinite sum $\sum_i c_i + \sum_v c_v$
    is an ($n+1$)-chain $c$ in $X$. By construction, $\partial c = z$.
  \end{enumerate}
\end{proof}

Note that Theorem~\ref{thm:Hngraph}\ref{enum:H0graph} holds for
every connected locally compact Hausdorff space $X$. Hence $H_0(X) =
\bigoplus_{C\in\mathfrak C}\Z$, where $\mathfrak C$ is the set of
components of $\hat X$.

\bibliographystyle{amsplain}
\bibliography{collective}

\small
\parindent=0pt
\vskip2mm plus 1fill

\begin{tabular}{cc}
\begin{minipage}[t]{0.5\linewidth}
Reinhard Diestel\\ {\tt}\\
Mathematisches Seminar\\
Universit\"at Hamburg\\
Bundesstra\ss e 55\\
20146 Hamburg\\
Germany\\
\end{minipage} 
&
\begin{minipage}[t]{0.5\linewidth}
Philipp Spr\"ussel\\ {\tt <Philipp.Spruessel@gmx.de>}\\
Mathematisches Seminar\\
Universit\"at Hamburg\\
Bundesstra\ss e 55\\
20146 Hamburg\\
Germany\\
\end{minipage}
\end{tabular} 

\smallskip
Version 22.5.2011
\end{document}